\newcommand{\be}{\begin{equation}}
\newcommand{\ee}{\end{equation}}
\newcommand{\beq}{\begin{eqnarray}}
\newcommand{\eeq}{\end{eqnarray}}
\newtheorem{thm}{Theorem}[section]
\newtheorem{lma}[thm]{Lemma}
\newtheorem{prop}[thm]{Proposition}
\newtheorem{defn}[thm]{Definition}
\newtheorem{rem}[thm]{Remark}
\numberwithin{equation}{section}
\newtheorem*{GRH}{Generic Regularity Hypothesis}
\def\be{\begin{equation}}
\def\ee{\end{equation}}
\def\bee{\begin{equation*}}
\def\eee{\end{equation*}}
\def\Ric{\text{\rm Ric}}
\def\Rm{\text{\rm Rm}}
\def\ve{\varepsilon}
\def\vp{\varphi}
\newcommand{\ti}[1]{\tilde{#1}}
\DeclareMathOperator{\sys}{sys}
\DeclareMathOperator{\Area}{Area}
\DeclareMathOperator{\bRic}{biRic}
\DeclareMathOperator{\dist}{dist}
\DeclareMathOperator{\Div}{div}
\begin{document}

\title[]
{Homological $n$-systole in $(n+1)$-manifolds and bi-Ricci curvature}

\author{Jianchun Chu}
\address[Jianchun Chu]{School of Mathematical Sciences, Peking University, Yiheyuan Road 5, Beijing 100871, People's Republic of China}
\email{jianchunchu@math.pku.edu.cn}

\author{Man-Chun Lee}
\address[Man-Chun Lee]{Department of Mathematics, The Chinese University of Hong Kong, Shatin, Hong Kong, China}
\email{mclee@math.cuhk.edu.hk}

\author{Jintian Zhu}
\address[Jintian Zhu]{Institute for Theoretical Sciences, Westlake University, 600 Dunyu Road, 310030, Hangzhou, Zhejiang, People's Republic of China}
\email{zhujintian@westlake.edu.cn}

\renewcommand{\subjclassname}{\textup{2020} Mathematics Subject Classification}
\subjclass[2020]{Primary 53C21, 53C24}

\date{\today}

\begin{abstract}
In this paper, we prove an optimal systolic inequality and the corresponding rigidity in the equality case on closed manifolds with positive bi-Ricci curvature, which generalizes the work of Bray-Brendle-Neves in \cite{BrayBrendleNeves2010}. The proof is given in all dimensions based on the method of minimal surfaces under the Generic Regularity Hypothesis, which is known to be true up to dimension ten.
\end{abstract}

\maketitle

\markboth{Jianchun Chu, Man-Chun Lee and Jintian Zhu}{Homological $n$-systole in $(n+1)$-manifolds and bi-Ricci curvature}

\section{Introduction}

Let $(M^{n+1},g)$ be an oriented closed Riemannian manifold. When $M^2$ is two dimensional with uniformly positive Gaussian curvature $K$, the classical theorem of Toponogov \cite{Toponogov1959} states that every simple closed geodesic $\gamma$ has length bounded from above by $2\pi\cdot (\inf_M {K})^{-1/2}$ and the corresponding rigidity holds on $M=\mathbb{S}^2$.  This raised an interesting restriction phenomenon relating the positivity of curvature and ``minimal" sub-manifolds. It is natural to look for a generalization in higher dimension. However, even in the standard $3$-sphere the area of a closed embedded minimal surface can be arbitrarily large (refer to \cite[Theorem B]{Zhou2020}). This means that a direct generalization of the Toponogov theorem to dimension three is out of reach and a modification is necessary in general.

In the presence of positive scalar curvature, partially motivated by the work of Schoen-Yau \cite{SchoenYau1979} on the resolution of Geroch's conjecture in dimension three, Bray-Brendle-Neves \cite{BrayBrendleNeves2010} considered three-manifolds $M^3$ with scalar curvature bounded from below by 2 with a locally area-minimizing embedded 2-sphere $\Sigma\subset M$. Particularly, they show that $\Sigma^2$ has area less than or equal to $4\pi$. More importantly, the equality holds only if $M^3$ is covered by $\mathbb{S}^2\times \mathbb{R}$. The infimum of all such embedded 2-sphere are usually referred as spherical $2$-systole in dimension three. Since then, there has been many interesting development in understanding stable minimal objects in manifolds with positive scalar curvature. For instances, a similar inequality for embedded $\mathbb{RP}^2$ is also studied by Bray-Brendle-Eichmair-Neves \cite{BrayBrendleEichmairNeves2010}, see also the notion of width studied by Colding-Minicozzi \cite{ColdingMinicozzi2005}. In the non-compact case, the third-named author \cite{Zhu2023} considered the splitting theorem of Bray-Brendle-Neves \cite{BrayBrendleNeves2010}.

In this work, we are interested in objects which are minimal within the homology group. For $(M^{n+1}, g)$ with non-trivial \textit{homology} group  $H_n(M)$, we define the {\it homological $n$-systole} to be
$$
\sys_n(M,g):=\inf\left\{\Area_g(\Sigma)
\left|\begin{array}{c}\Sigma \mbox{ is a smooth hypersurface}\\
\mbox{ in } M\mbox{ with }[\Sigma]\neq 0\in H_n(M)
\end{array}\right.\right\}.
$$
When $n=2$, it was also considered by Stern \cite{Stern2022} using the level set approach. It is also dominated by the spherical $2$-systole when $n=2$, and thus it follows from  \cite{BrayBrendleNeves2010} that in dimension 3,
\begin{equation*}
    \label{intro:eqn}\sys_2(M,g)\cdot \min_M R(g) \leq 8\pi,
\end{equation*}
where the equality holds only if $M^3$ is covered by cylinder. We refer  inequalities of this type as systolic inequality.

Motivated by this, it is tempting to ask if we can estimate (optimally) the left-hand side in \eqref{intro:eqn} from above in all dimensions. However without topological control, it is hard to expect any quantitative control of area under only scalar curvature lower bound. One way to circumvent this is to strengthen the curvature. In this work, we are interested in generalizing the systolic inequality to higher dimension under the positive bi-Ricci curvature. This curvature condition was first introduced by Shen-Ye \cite{ShenYe1996} concerning its interplay with the geometry of stable minimal hypersurfaces. Given a Riemannian manifold $(M,g)$,  for each pair of vectors $(v,w)$ we define the bi-Ricci curvature as
$$
\bRic(v,w)=\Ric(v,v)+\Ric(w,w)-\Rm(v,w,w,v).
$$
When the dimension is three, this coincides with one half of the scalar curvature for any pair of $(v,w)$ which is orthonormal while positive bi-Ricci curvature is stronger in higher dimension and is a special case of the intermediate curvature introduced by Brendle-Hirsch-Johne \cite{BrendleHirschJohne2024}. This curvature notion appears naturally in the study of weighted minimal slicings \cite{BrendleHirschJohne2024} or $\mu$-bubbles \cite{Xu2023}. Perhaps most surprisingly, it played an important role in the resolution of the stable Bernstein problem \cite{ChodoshLiMinterStryker2024, Mazet2024}.

Motivated by the above mentioned works and development, the goal of this paper is to investigate the interplay between homological $n$-systole of a {\it closed} Riemannian manifold and its bi-Ricci curvature. There have been many researches  on the geometric effects of bi-Ricci curvature with dimension constraints, for example see \cite{ShenYe1996,Xu2023,BrendleHirschJohne2024,AntonelliXu2024}. A feature of our work here is that we shall handle the $n$-systole problem in {\it all} dimensions using the method of minimal surfaces under the following {\it Generic Regularity Hypothesis}.

\begin{GRH}
    Let $(M^{n+1},g)$ be a closed, oriented $(n+1)$-dimensional Riemannian manifold. There exist arbitrarily $C^\infty$-small perturbations $g'$ of $g$ with the property that for every non-zero homology class $\alpha\in H_n(M)$ there exist disjoint, smooth, closed, oriented hypersurfaces $M_1,\ldots, M_Q$ and intergers $k_1,\ldots, k_Q$ so that $\sum_{i=1}^Qk_iM_i$ is of least area in the homology class $\alpha$ with respect to $g'$.
\end{GRH}
The validity of the Generic Regularity Hypothesis is one of the important conjectures in geometric measure theory.  Recently, the Generic Regularity Hypothesis has been verified to be true up to dimension $n+1=10$. For this topic we refer interested readers to \cite{Smale1993,ChodoshLiokumovichSpolaor2020,LiWang2020,ChodoshMantoulidisSchulze2023} and the references therein.

For any continuous function $\sigma:M\to \mathbb R$, we say that the bi-Ricci curvature is bounded from below by $\sigma$ if $\bRic(v,w)\geq \sigma(p)$ at each point $p$ for any orthonormal pair $(v,w)$ in $T_pM$. In this case, we abbreviate it as  $\bRic \geq \sigma$ for short. Our main theorem is stated as follows:

\begin{thm}\label{Thm: main}
Let $(M^{n+1},g)$ be an orientable $(n+1)$-dimensional closed Riemannian manifold with $H_n(M)\neq 0$ and $\bRic\geq n-1$. Suppose additionally that the Generic Regularity Hypothesis holds.
Then we have
\[
\sys_n(M^{n+1},g)\leq|\mathbb S^n|,
\]
where the equality yields that the universal cover of $(M,g)$ splits isometrically as $\mathbb S^n\times \mathbb R$.
\end{thm}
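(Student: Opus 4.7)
The plan is to follow the Bray--Brendle--Neves strategy, replacing the scalar curvature input with a bi-Ricci input in the spirit of Shen--Ye. First I would use the Generic Regularity Hypothesis to pass to $C^\infty$-small perturbations $g'$ of $g$ for which $\sys_n(M,g')$ is attained by a smooth, integer-multiplicity, area-minimizing cycle $\sum_i k_i M_i$; some component $\Sigma = M_{i_0}$ is then a smooth, closed, orientable, two-sided stable minimal hypersurface with $[\Sigma]\neq 0$. Since the $n$-systole is lower semicontinuous under $C^\infty$-perturbations, it is enough to prove the area inequality (and the rigidity) for $\Sigma$ and pass to the limit $g' \to g$.

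For such a $\Sigma$ with unit normal $\nu$, the Gauss equation together with minimality $H=0$ and the hypothesis $\bRic\ge n-1$ yields, for every unit tangent $e\in T\Sigma$, the pointwise identity
$$\Ric_\Sigma(e,e)+\Ric_M(\nu,\nu)+\sum_k h(e,e_k)^2=\bRic(\nu,e)\ge n-1 \qquad (\ast)$$
In parallel, stability gives $\int_\Sigma|\nabla u|^2 \ge \int_\Sigma (\Ric_M(\nu,\nu)+|A|^2)u^2$ and a positive first eigenfunction $\phi$ on $\Sigma$ with $-\Delta_\Sigma\phi=\lambda_1\phi+(\Ric_M(\nu,\nu)+|A|^2)\phi$, $\lambda_1\ge 0$. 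I would then combine the two inputs through a warped-product construction modelled on the expected extremizer $\mathbb S^n\times\mathbb R$: consider $\bar g = g_\Sigma + \phi^{2/(n-1)}\,dt^2$ on $\Sigma\times\mathbb R$, so that the eigenfunction equation for $\phi$ governs the ``warping Laplacian'' entering the $\partial_t$-Ricci of $\bar g$, while $(\ast)$ handles the tangential Ricci. The aim is to conclude $\overline{\Ric}\ge(n-1)\bar g$ (at worst in a weighted Bakry--Emery sense), from which a Bishop--Gromov comparison to $\mathbb S^n\times\mathbb R$ yields the sharp systolic bound $\Area_g(\Sigma)\le |\mathbb S^n|$.

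In the equality case every step above must saturate, forcing $\lambda_1=0$, $\phi$ constant, $|A|\equiv 0$, $\Ric_M(\nu,\nu)\equiv 0$ along $\Sigma$, and $(\Sigma,g_\Sigma)$ isometric to $\mathbb S^n$. The nontrivial constant Jacobi field, via the implicit function theorem, produces a local foliation of $M$ near $\Sigma$ by area-minimizing copies of $\Sigma$, each inheriting the same rigidity; an open--closed continuation on the universal cover (in the spirit of \cite{BrayBrendleNeves2010}) then yields the global isometric splitting $\tilde M\cong\mathbb S^n\times\mathbb R$. The main obstacle is the middle step: turning $(\ast)$ and stability into a \emph{pointwise} Ricci-type lower bound of Bishop--Gromov quality. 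Naively the two combine only to integrated information such as $\int R_\Sigma \ge n(n-1)\,\Area_g(\Sigma)$, which is not sharp; the role of the warping factor $\phi^{2/(n-1)}$ (or a more subtle substitute adapted from the weighted slicings of Brendle--Hirsch--Johne) is precisely to upgrade this to a sharp pointwise synthetic Ricci bound. Finding the right such modification, so that the resulting volume comparison returns exactly $|\mathbb S^n|$ and degenerates only on $\mathbb S^n\times\mathbb R$, is where the full strength of bi-Ricci (as opposed to scalar curvature) must be used.
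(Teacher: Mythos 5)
Your reduction via the Gauss equation and minimality to the pointwise inequality $(\ast)$ is exactly the computation the paper makes (cf.\ its \eqref{Eq: spectral function-1}), and the decision to push the estimate through the first eigenfunction $\phi$ of the Jacobi operator is also the right idea. But there are three genuine gaps.

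First, the middle step you flag as ``the main obstacle'' is in fact an obstacle: the combination of $(\ast)$ with the stability inequality gives exactly the statement that $\Sigma$ has $1$-spectral Ricci curvature bounded below by $n-1$, i.e.\ there is a positive function $u$ with $-\Delta_\Sigma u + \lambda_{\Ric,\Sigma}\,u \geq (n-1)u$. What you need at this point is the spectral Bishop--Gromov/Bonnet--Myers theorem of Antonelli--Xu, which says precisely that this spectral condition on a closed $n$-manifold ($n\geq 3$) forces $\Area(\Sigma)\leq|\mathbb S^n|$, with equality iff $\Sigma\cong\mathbb S^n$. Your warped-product ansatz $\bar g = g_\Sigma + \phi^{2/(n-1)}dt^2$ gestures at the mechanism behind that theorem, but you don't prove it, and naive Bishop--Gromov on $(\Sigma\times\mathbb R,\bar g)$ controls the volume of the warped product, not the volume of the slice $\Sigma$ directly; getting the \emph{sharp} constant $|\mathbb S^n|$ out of that ansatz is exactly the content of the cited theorem and is not a routine reduction. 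Without it your area bound is incomplete.

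Second, passing from the perturbed metrics $g'$ back to $g$ is not merely a matter of semicontinuity of the systole. For the \emph{inequality} it is fine: you can make $\bRic_{g'}\geq (n-1)/(1+\varepsilon)^2$ and $g'\geq(1+\varepsilon)^{-2}g$ and pass to the limit. But for the \emph{rigidity} you need the limit of the smooth $g'$-minimizers to be a \emph{smooth} $g$-area-minimizing hypersurface, and in dimension $n+1\geq 8$ a mass-minimizing limit current can have a singular set. The paper closes this with a quantitative estimate $\int_{N_i}\|A_i\|^2\,d\sigma = o(1)$ (obtained by combining stability, the spectral estimate, Michael--Simon--Sobolev, and an $L^p$-interpolation bound on the eigenfunction), which forces the regular part of the limit to be totally geodesic and then, by a blow-up/dimension-reduction argument, the singular set to be empty. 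Your proposal skips this entirely.

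Third, your rigidity argument jumps too quickly to a CMC foliation via the implicit function theorem. The equality case of the infinitesimal rigidity gives you $\Sigma\cong\mathbb S^n$, $A\equiv 0$, and $\Ric_M(\nu,\nu)\equiv 0$ \emph{along $\Sigma$ only}. The BBN CMC-foliation argument needs, crucially, nonnegative ambient Ricci curvature to run the Poincar\'e-inequality step, and nothing in the bi-Ricci hypothesis gives you that a priori. The paper obtains it by a separate metric-deformation argument (in the spirit of Liu and Chodosh--Eichmair--Moraru): one conformally shrinks the metric in a small ball one-sidedly, extracts a nearby area-minimizer, and shows it must again be a totally geodesic round $\mathbb S^n$; a quantitative version of the bi-Ricci drop under conformal change produces a contradiction unless such minimizers sweep out a neighborhood. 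This yields a one-sided foliation by totally geodesic spheres, from which one deduces $\Rm_M(\cdot,\nu,\nu,\cdot)=0$ along $\Sigma$ and then, by letting the foliation fill $M$, nonnegative sectional curvature \emph{everywhere}. Only after that does the BBN open--closed argument give the global splitting $\widetilde M\cong\mathbb S^n\times\mathbb R$. Without this intermediate step, the ``open--closed continuation in the spirit of BBN'' does not go through.
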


This is largely motivated by the recent work of Antonelli-Xu \cite{AntonelliXu2024} in estimating the volume growth of manifolds with spectral Ricci lower bound, see also \cite{ChodoshLiMinterStryker2024}. The main challenge lies in studying its rigidity due to the presence of singularity on the minimizer. To overcome this, we make use of the metric deformation trick of Liu  \cite{Liu2013} and Chodosh-Eichmair-Moraru \cite{ChodoshEichmairMoraru2019}, to produce some foliation of minimizer and improve the curvature non-negativity in the presence of minimizer under the Generic Regularity Hypothesis.

\bigskip

The paper will be arranged as follows. In Section \ref{Sec: low dim}, we prove Theorem \ref{Thm: main} with the additional dimension assumption $n+1\leq 7$, where area-minimizing hypersurfaces are automatically smooth and so we don't really use the Generic Regularity Hypothesis. In Section \ref{Sec: generic regularity}, we deal with the case when $n+1\geq 8$. In Appendix \ref{Appendix}, we collect some known curvature formulas under conformal deformation.

\bigskip

{\bf Acknowledgements.} The first-named author was partially supported by National Key R\&D Program of China 2023YFA1009900 and NSFC grant 12271008. The second-named author was partially supported by Hong Kong RGC grant (Early Career Scheme) of Hong Kong No. 24304222, No. 14300623, and a NSFC grant No. 12222122. The third-named author was partially supported by National Key R\&D Program of China 2023YFA1009900 as well as the startup fund from Westlake University.

\section{Proof of Theorem \ref{Thm: main} when $n+1\leq 7$}\label{Sec: low dim}
\subsection{Systolic inequality}
We first recall the notion of spectral Ricci curvature:
\begin{defn}
Let $(N,h)$ be a complete Riemannian manifold. Given any constant $\gamma>0$ we say that $(N,h)$ has $\gamma$-spectral Ricci curvature bounded from below by a constant $\mu$ if there is a smooth positive function $u$ such that
\begin{equation}\label{Eq: spectral function}
-\gamma\Delta_h u+\lambda_\Ric u\geq \mu u,
\end{equation}
where $\lambda_\Ric$ is the continuous function on $N$
 given by
 $$
 \lambda_\Ric(x) = \min_{v\in T_{x}N, \, |v|=1}\Ric(v,v).
 $$
 \end{defn}

 Our proof is based on the following volume comparison theorem proved by Antonelli-Xu \cite{AntonelliXu2024} recently.
 \begin{thm}[Theorem 1 of \cite{AntonelliXu2024}]\label{Thm: spectral comparison}
 For any constant
 $$\gamma\in \left[0,\frac{n-1}{n-2}\right]$$ if $(N,h)$ is a closed Riemannian $n$-manifold with $n\geq 3$ having $\gamma$-spectral Ricci curvature bounded from below by $n-1$, then we have $$\Area(N)\leq |\mathbb S^n|$$ and the equality holds if and only if $N$ is isometric to the standard unit $n$-sphere $\mathbb S^n$.
 \end{thm}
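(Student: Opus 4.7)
The plan is to upgrade the spectral Ricci inequality into a genuine pointwise Ricci lower bound on an auxiliary manifold built from $(N,h)$ and $u$, and then apply the Bishop volume comparison together with its rigidity statement.

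First, I would replace the super-solution $u$ by the principal positive eigenfunction of $-\gamma\Delta+\lambda_\Ric$, which by the variational characterization of the first eigenvalue satisfies
\begin{equation*}
-\gamma\Delta u+\lambda_\Ric u=\mu u,\qquad \mu\geq n-1,
\end{equation*}
upgrading the inequality to a PDE identity while keeping $u$ smooth and strictly positive.

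Next, I would construct an augmented metric $\bar h=u^{2a}h+u^{2b}g_F$ on $\bar N=N\times F$, where $F$ is a model fiber of constant sectional curvature and dimension $k$, and the exponents $a,b$ together with $k$ are to be chosen as functions of $\gamma$ and $n$. The conformal-change Ricci formulas collected in the appendix, combined with standard warped-product Ricci identities, express $\widetilde{\Ric}$ on $\bar N$ in terms of $\Ric^N$, $\nabla^2 u/u$, $|\nabla u|^2/u^2$ and the intrinsic curvature of $F$. I would pin $(a,b,k)$ down by three simultaneous conditions: (i) on base directions, the $\nabla^2 u/u$ contribution combines with $\lambda_\Ric$ via the eigenvalue identity $\Delta u/u=\gamma^{-1}(\lambda_\Ric-\mu)$ to give $\widetilde{\Ric}\geq(n-1)\bar h$; (ii) on fiber directions, the intrinsic curvature of $F$ cancels the remaining derivative terms to yield the same bound; and (iii) the volume weight $u^{an+bk}$ reduces to a constant, i.e.\ $an+bk=0$. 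The admissibility range $\gamma\in[0,(n-1)/(n-2)]$ should emerge from this system as the precise condition under which a solution with $k\geq 0$ and with fiber curvature of the correct sign exists.

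Granted the pointwise lower bound $\widetilde{\Ric}\geq(n-1)\bar h$, Bishop's theorem gives $\Vol_{\bar h}(\bar N)\leq|\mathbb S^{n+k}|$. Using the product structure and $an+bk=0$, this factorizes as $\Vol_h(N)\cdot\Vol_{g_F}(F)\leq|\mathbb S^{n+k}|$, and choosing $F$ to be the round sphere of the appropriate radius so that $|\mathbb S^{n+k}|/\Vol_{g_F}(F)=|\mathbb S^n|$ yields $\Area(N)=\Vol_h(N)\leq|\mathbb S^n|$. For rigidity, equality forces $(\bar N,\bar h)$ to be the round $(n+k)$-sphere; the only warped-product decompositions of a round sphere compatible with our construction force the warping function $u$ to be constant, whence $(N,h)$ itself is the unit round $n$-sphere. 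The main obstacle is the algebraic step of selecting $(a,b,k)$ so that all three conditions hold simultaneously, and showing that the feasibility set is exactly $\gamma\in[0,(n-1)/(n-2)]$; a secondary subtlety is the rigidity analysis, where one must extract constancy of $u$ from equality in Bishop together with the specific warped-product structure, rather than just concluding that $\bar h$ is abstractly round.
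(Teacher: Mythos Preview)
The paper under review does not contain a proof of this statement: it is quoted as Theorem~1 of Antonelli--Xu \cite{AntonelliXu2024}, with only Remark~\ref{Rem: integral estimate} appended. There is therefore no in-paper argument to compare your proposal against.

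That said, two remarks on the proposal itself. First, for Bishop on the $(n+k)$-dimensional manifold $\bar N$ you need $\widetilde{\Ric}\geq (n+k-1)\bar h$, not $(n-1)\bar h$; this looks like a slip rather than a conceptual error.

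Second, and more substantively, your condition (iii) that $an+bk=0$ is not how the argument in \cite{AntonelliXu2024} runs, and it is likely infeasible when combined with your other constraints. As recorded in Remark~\ref{Rem: integral estimate}, what the Antonelli--Xu proof actually delivers is the weighted inequality
\[
\int_N u^{\frac{2\gamma}{n-1}}\,\mathrm d\sigma\leq |\mathbb S^n|
\]
for $u$ normalized by $\min_N u=1$; the unweighted bound $\Area(N)\leq|\mathbb S^n|$ then follows from $u\geq 1$. In your framework this corresponds to keeping the volume weight $u^{an+bk}$ nontrivial, equal to $u^{2\gamma/(n-1)}$, rather than forcing it to be constant. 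If you insist on $an+bk=0$, you must in addition arrange that the fiber $F$ has exactly the curvature dictated by (ii) \emph{and} volume $|\mathbb S^{n+k}|/|\mathbb S^n|$; this is one condition too many, and for generic $\gamma$ in the stated range there is no reason the numbers match. Dropping (iii) and absorbing the weight via the normalization $\min u=1$ both rescues the argument and yields the stronger weighted inequality that the present paper actually needs later in Proposition~\ref{Prop: existence of systole hypersurface}.
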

 \begin{rem}\label{Rem: integral estimate}
 From the proof of above theorem in \cite{AntonelliXu2024} one actually has the stronger conclusion
 \begin{equation}\label{Eq: integral of u small p}
 \int_Nu^{\frac{2\gamma}{n-1}}\,\mathrm d\sigma\leq |\mathbb S^n|,
 \end{equation}
 where $u$ is the positive smooth function in \eqref{Eq: spectral function} with normalization $\min_N u=1$. This will be used to prove the rigidity part of Theorem \ref{Thm: main} in Section \ref{Sec: generic regularity}.
 \end{rem}

 \begin{lma}\label{Lem: infinitesimal rigidity}
 Suppose that $(M^{n+1},g)$ is a Riemannian manifold with $\bRic\geq n-1$ and that $N^n$ is a smooth, connected, stable, closed and two-sided minimal hypersurface in $(M,g)$. Then we have $\Area(N)\leq |\mathbb S^n|$, where the equality implies
 \begin{itemize}\setlength{\itemsep}{1mm}
 \item $N$ is isometric to the standard unit $n$-sphere $\mathbb S^n$;
 \item the normal Ricci curvature $\Ric_M(\nu,\nu)$ vanishes along $N$;
 \item $N$ is totally geodesic.
 \end{itemize}
 \end{lma}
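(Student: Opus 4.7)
The plan is to apply the Antonelli--Xu volume comparison (Theorem \ref{Thm: spectral comparison}) with spectral parameter $\gamma = 1$, which lies in the admissible range $[0,(n-1)/(n-2)]$. The key step is to combine the stability of $N$ with the Gauss equation and the bi-Ricci bound to build a positive witness function $u$ on $N$ verifying the $1$-spectral Ricci lower bound $n-1$; the witness is the positive first eigenfunction of the Jacobi operator.

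Concretely, stability of the two-sided minimal hypersurface $N$ furnishes $u>0$ and $\mu_1\geq 0$ with
$$
-\Delta_N u = \bigl(\mu_1 + \Ric_M(\nu,\nu) + |A|^2\bigr)\,u,
$$
where $A$ is the second fundamental form. The Gauss equation combined with $\tr A = 0$ gives, for every unit $v\in T_p N$,
$$
\Ric_N(v,v) = \Ric_M(v,v) - \Rm_M(v,\nu,\nu,v) - A^2(v,v),
$$
so $\bRic(v,\nu)\geq n-1$ together with $A^2(v,v)\leq |A|^2$ yields
$$
\lambda_{\Ric}^N(p) \geq (n-1) - \Ric_M(\nu,\nu)(p) - |A|^2(p).
$$
Adding the last two displayed inequalities cancels the $\Ric_M(\nu,\nu) + |A|^2$ terms and produces
$$
-\Delta_N u + \lambda_{\Ric}^N u \geq (n-1)\,u,
$$
so $(N,g_N)$ has $1$-spectral Ricci curvature bounded below by $n-1$. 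Theorem \ref{Thm: spectral comparison} then delivers $\Area(N)\leq |\mathbb{S}^n|$, with $N$ isometric to $\mathbb{S}^n$ whenever equality holds.

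For the remaining rigidity assertions, assume $\Area(N) = |\mathbb{S}^n|$ and normalize $u$ so that $\min_N u = 1$. Remark \ref{Rem: integral estimate} gives $\int_N u^{2/(n-1)}\,\mathrm d\sigma \leq |\mathbb{S}^n|$, while $u\geq 1$ forces $\int_N u^{2/(n-1)}\,\mathrm d\sigma \geq \Area(N) = |\mathbb{S}^n|$; equality propagates back to $u\equiv 1$. Substituting into the Jacobi equation gives $\Ric_M(\nu,\nu) + |A|^2 = -\mu_1 \leq 0$. Conversely, $N\cong\mathbb{S}^n$ makes $\Ric_N\equiv n-1$, so the Gauss--bi-Ricci computation upgrades to the pointwise bound $\Ric_M(\nu,\nu) + A^2(v,v)\geq 0$ for every unit $v\in T_pN$; summing over an orthonormal frame yields $n\,\Ric_M(\nu,\nu) + |A|^2 \geq 0$. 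Combining with $\Ric_M(\nu,\nu) + |A|^2 \leq 0$ and $|A|^2\geq 0$ forces $\Ric_M(\nu,\nu)\equiv 0$ and $A\equiv 0$, exactly the three rigidity conclusions stated.

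The delicate point is the algebraic matching that identifies $\gamma = 1$ as the spectral parameter for which the $|A|^2$ contribution from stability cancels exactly against the $-|A|^2$ from the Gauss equation; the fact that $1$ lies in the admissible interval $[0,(n-1)/(n-2)]$ is the reason the approach works in all dimensions. Once this matching is performed, Theorem \ref{Thm: spectral comparison} and Remark \ref{Rem: integral estimate} absorb the analytic content, and the rigidity becomes elementary inequality chasing.
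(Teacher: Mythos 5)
Your proof is correct and follows the same core strategy as the paper: take the first eigenfunction $u$ of the Jacobi operator as the spectral witness, combine stability with the Gauss equation and the bi-Ricci bound to verify the $1$-spectral Ricci lower bound $n-1$, and invoke the Antonelli--Xu comparison with $\gamma=1$. The only deviations are in the equality analysis, where you take a slightly different path to the same endpoint: the paper deduces that $u$ is constant directly from $\lambda_{\Ric,N}\equiv n-1$ (since then $-\Delta_N u\geq 0$, so $u$ is superharmonic on a closed manifold), whereas you sandwich $\int_N u^{2/(n-1)}$ using Remark~\ref{Rem: integral estimate}, which the paper reserves for Section~\ref{Sec: generic regularity}; and the paper concludes $A\equiv 0$ by tracking equality in the chain to get $\|A\|^2=\sum_j A_{1j}^2$ and combining with minimality, whereas you sum the pointwise Gauss--bi-Ricci bound $\Ric_M(\nu,\nu)+A^2(v,v)\geq 0$ over a frame and play it against $\Ric_M(\nu,\nu)+\|A\|^2=-\mu_1\leq 0$. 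Both rigidity variants are valid and roughly equally efficient, so this is the same approach up to bookkeeping.
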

 \begin{proof}
 Since $N$ is closed and two-sided, the stability of $N$ yields
 \begin{equation}\label{Eq: stability}
 \int_N|\nabla \vp|^2-\Big(\Ric_M(\nu,\nu)+\|A\|^2\Big)\vp^2\,\mathrm d\sigma\geq 0
 \end{equation}
 for any test function $\varphi\in C^\infty(N)$.  Take $u$ to be the first eigenfunction of the Jacobi operator
 $$\mathcal J=-\Delta_N-\Big(\Ric_M(\nu,\nu)+\|A\|^2\Big).$$ We are going to verify
 \begin{equation}\label{Eq: spectral function-2}
 -\Delta_Nu+\lambda_{\Ric,N}u\geq (n-1)u.
 \end{equation}
 In other words, $N$ has $1$-spectral Ricci curvature bounded from below by $n-1$.

Since the function $u$ is positive and the first eigenvalue is nonnegative from the stability inequality \eqref{Eq: stability}, we have
\[
-\Delta_Nu\geq \Big(\Ric_M(\nu,\nu)+\|A\|^2\Big)u.
\]
Fix an arbitrary point $p$ in $N$ and then take an orthonormal frame $\{e_i\}_{i=1}^n$ of $T_pN$ such that $\lambda_{\Ric,N}(p)=\Ric_N(e_1,e_1)$. From the Gauss equation we can compute at the point $p$ that
$$
\Ric_M(\nu,\nu) = \bRic_M(\nu,e_1)-\lambda_{\Ric,N}(p)+\sum_{j=2}^{n}\left(A_{11}A_{jj}-A_{1j}^2\right).
$$
Since $N$ is minimal, we see that
\begin{equation*}
\sum_{j=2}^{n}\left(A_{11}A_{jj}-A_{1j}^2\right) = -A_{11}^{2}-\sum_{j=2}^{n}A_{1j}^2 = -\sum_{j=1}^{n}A_{1j}^2.
\end{equation*}
Therefore, at the point $p$ we have
\begin{equation}\label{Eq: spectral function-1}
\begin{split}
-\Delta_Nu \geq {} & \Big(\Ric_M(\nu,\nu)+\|A\|^2\Big)u \\[1mm]
= {} & \Big(\bRic_M(\nu,e_1)-\lambda_{\Ric,N}(p)+\|A\|^{2}-\sum_{j=1}^{n}A_{1j}^2\Big)u \\
\geq {} & \Big((n-1)-\lambda_{\Ric,N}\Big)u,
\end{split}
\end{equation}
which gives the desired inequality \eqref{Eq: spectral function-2}.
 It follows from Theorem \ref{Thm: spectral comparison} that $$\Area(N)\leq |\mathbb S^n|.$$

\smallskip

Next we discuss the equality case. From the rigidity part of Theorem \ref{Thm: spectral comparison} we know that $N$ is isometric to the standard unit $n$-sphere $\mathbb S^n$. In particular, we must have $\lambda_{\Ric,N}\equiv n-1$ and so \eqref{Eq: spectral function-1} implies that $u$ is a positive constant on $N$. The equality of \eqref{Eq: spectral function-1} yields $\bRic_M(\nu,e_1)=n-1$ and $\|A\|^2=\sum_{j=1}^n A_{1j}^2$ along $N$. Combined with the minimality of $N$ we see that $N$ is totally geodesic and that the normal Ricci curvature $\Ric_M(\nu,\nu)$ vanishes along $N$.
\end{proof}

\begin{prop}\label{Prop: systole inequality}
Let $(M^{n+1},g)$ be an orientable $(n+1)$-dimensional closed Riemannian manifold with $H_n(M)\neq 0$ and $\bRic\geq n-1$. If $n+1\leq 7$, then we can find a connected, smooth, embedded, homologically non-trivial hypersurface $N_{\mathrm{sys}}$ such that
\[
\sys_n(M,g)=\Area(N_{\mathrm{sys}})\leq|\mathbb S^n|.
\]
If $\sys_n(M,g)=|\mathbb S^n|$, then
\begin{itemize}\setlength{\itemsep}{1mm}
\item $N_{\mathrm{sys}}$ is isometric to the standard unit $n$-sphere $\mathbb S^n$;
\item the normal Ricci curvature $\Ric_M(\nu,\nu)$ vanishes along $N_{\mathrm{sys}}$;
\item $N_{\mathrm{sys}}$ is totally geodesic.
\end{itemize}
\end{prop}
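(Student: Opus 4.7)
The plan is to construct $N_{\text{sys}}$ by direct minimization in integer homology, exploit the low-dimensional regularity of area-minimizing currents to obtain smoothness for free, and then feed a single connected two-sided stable component into Lemma \ref{Lem: infinitesimal rigidity} to extract both the systolic inequality and its rigidity.

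First, I would fix a nonzero class $\alpha \in H_n(M;\mathbb Z)$ and invoke Federer--Fleming compactness to produce an integral current $T$ of least mass in $\alpha$. Because $n+1 \leq 7$, the Almgren--Schoen--Simon regularity theory rules out interior singularities, so one may write $T = \sum_i k_i \llbracket M_i \rrbracket$ with the $M_i$ smooth, closed, connected, oriented hypersurfaces and $k_i \in \mathbb Z_{>0}$. The $M_i$ are automatically two-sided because $M$ is oriented, and each one is stable minimal because any smooth ambient deformation of a single component produces an admissible competitor for $T$ in the class $\alpha$. Since $[T] = \alpha \neq 0$, at least one component $N$ satisfies $[N] \neq 0$ in $H_n(M)$, and Lemma \ref{Lem: infinitesimal rigidity} applies to give $\Area(N) \leq |\mathbb S^n|$. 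By definition of $\sys_n$ this already proves $\sys_n(M,g) \leq |\mathbb S^n|$.

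To upgrade this inequality to the attainment $\sys_n(M,g) = \Area(N_{\text{sys}})$, I would note that the preceding argument shows $\sys_n(M,g) = \inf_{\alpha \neq 0}\mathfrak a(\alpha)$, where $\mathfrak a(\alpha)$ denotes the least mass of integral representatives of $\alpha$. Taking a minimizing sequence of classes $\alpha_k$, I extract connected two-sided stable components $N_k$ with $[N_k] \neq 0$ and $\Area(N_k) \to \sys_n(M,g)$. The Schoen--Simon curvature estimates in dimension at most seven, together with the uniform area bound, then furnish a smooth subsequential limit $N_\infty$ with some multiplicity $m \geq 1$, and the candidate is $N_{\text{sys}} := N_\infty$.

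The main obstacle I anticipate is controlling the multiplicity $m$. If $m \geq 2$ and $N_\infty$ is two-sided, then $[N_k] = m[N_\infty]$ for large $k$ would exhibit a smooth nontrivial representative of area $\sys_n/m < \sys_n$, contradicting the definition of $\sys_n$; if $N_\infty$ is one-sided (forcing $m$ even, with $N_k$ converging as a cover of $N_\infty$), I would handle it either by passing to the orientation double cover or by noticing that any sufficiently small normal graph over $N_\infty$ produces an admissible two-sided competitor that must be beaten by $N_{\text{sys}}$, again forcing $m=1$. Once multiplicity one is secured, the rigidity statement is immediate: if $\sys_n(M,g) = |\mathbb S^n|$ then $\Area(N_{\text{sys}}) = |\mathbb S^n|$, and the equality case of Lemma \ref{Lem: infinitesimal rigidity} delivers all three conclusions---$N_{\text{sys}}$ isometric to the round $\mathbb S^n$, vanishing $\Ric_M(\nu,\nu)$ along $N_{\text{sys}}$, and total geodesicity of $N_{\text{sys}}$---at once.
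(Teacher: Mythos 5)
Your high-level strategy matches the paper's: produce a connected, smooth, embedded, two-sided, stable, homologically non-trivial minimal hypersurface realizing $\sys_n(M,g)$ and feed it to Lemma \ref{Lem: infinitesimal rigidity}. Your proof of the inequality half is fine. The gap is in the realization step through smooth subsequential limits of components $N_k$, specifically in how you rule out multiplicity $m\geq 2$. In the two-sided case, the identity ``$[N_k]=m[N_\infty]$'' is unjustified and can fail: the $m$ sheets of $N_k$ need not be coherently oriented, so the current limit could be $(m-2j)[N_\infty]$ for various $j$ or even zero, and if $[N_\infty]=0$ the intended area contradiction evaporates. The correct obstruction in this case is different and geometric: with Schoen--Simon curvature bounds the convergence is graphical, so for a compact two-sided $N_\infty$ and $m\geq 2$ the nearby $N_k$ is a disjoint union of $m$ normal graphs and hence disconnected, contradicting your choice of $N_k$ as a connected component. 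In the one-sided case, ``passing to the orientation double cover'' or pushing off by a small normal graph does not give a competitor at all: the boundary of a tubular neighborhood of a one-sided hypersurface is null-homologous (it bounds the twisted interval bundle), so its area has no bearing on $\sys_n$. What actually kills this case is that the oriented sheets must cancel, so the current limit vanishes and flat-norm continuity of the homology class would force $[N_k]=0$ for large $k$, contradicting $[N_k]\neq 0$.

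The paper bypasses all of this with a direct decomposition argument that requires no convergence analysis. Flat-norm compactness of integral cycles, lower semi-continuity of mass, and the positive lower bound on the flat norm of a non-trivial cycle in a compact manifold produce a homologically area-minimizing integral current $N_{\mathrm{sys}}$ with $\mathbb M(N_{\mathrm{sys}})=\sys_n(M,g)$; regularity in dimension $n+1\leq 7$ lets one write $N_{\mathrm{sys}}=\sum_i m_iN_i$ with each $N_i$ smooth, connected, closed, oriented. Then every $N_i$ must be homologically non-trivial (otherwise discarding $m_jN_j$ gives a smaller-mass representative of the same class), whence $\Area(N_i)\geq\sys_n$ for each $i$ and $\sys_n=\sum_i m_i\Area(N_i)\geq\big(\sum_i m_i\big)\sys_n$ forces a single component with multiplicity one. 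This cleanly sidesteps the multiplicity issues your argument runs into; I would reroute through it.
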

 \begin{proof}
It comes from the geometric measure theory that we can find a smooth embedded oriented minimal hypersurface $N_{\mathrm{sys}}$ with integer multiplicity such that
\begin{itemize}
\item $N_{\mathrm{sys}}$ represents a non-trivial homology class in $H_n(M)$ and satisfies
$$\Area(N_{\mathrm{sys}})=\sys_n(M,g).$$
\item $N_{\mathrm{sys}}$ satisfies the following holomogically area-minimizing property: there holds
$$\Area(N_{\mathrm{sys}})\leq\Area(N)$$ for any smooth oriented hypersurface $N$ representing the same homology class as $[N_{\mathrm{sys}}]$.
\end{itemize}
For our purpose, we write
$$
N_{\mathrm{sys}}=\sum_{i=1}^k m_iN_i,
$$
where each  $N_i$ is an  oriented connected smooth embedded hypersurface and $m_i$ is the integer multiplicity of $N_i$.  Since $\Area(N_{\mathrm{sys}})=\sys_n(M,g)$, we conclude that all hypersurfaces $N_i$ are homologically non-trivial, and so $N_{\mathrm{sys}}$ has to be a connected, smooth, embedded oriented hypersurface with multiplicity one. The homologically area-minimizing property of $N_{\mathrm{sys}}$ implies the stability of $N_{\mathrm{sys}}$ as a minimal hypersurface and so Lemma \ref{Lem: infinitesimal rigidity} yields
$$\Area(N_{\mathrm{sys}})\leq |\mathbb S^n|.$$
In the case when $\sys_n(M,g)=|\mathbb S^n|$, we have $\Area(N_{\mathrm{sys}})=|\mathbb S^n|$. The rigidity part follows from Lemma \ref{Lem: infinitesimal rigidity}.
\end{proof}

\subsection{Rigidity}
\begin{prop}\label{Prop: totally geodesic approximation}
Under the assumptions of Proposition \ref{Prop: systole inequality}, let $N_{\mathrm{sys}}$ be the hypersurface obtained in Proposition \ref{Prop: systole inequality}. When $\Area(N_{\mathrm{sys}})=|\mathbb S^n|$, there is a sequence of totally geodesic hypersurfaces $N_i$ converging to $N_{\mathrm{sys}}$ smoothly from one side of $N_{\mathrm{sys}}$.
\end{prop}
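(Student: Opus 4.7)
My plan is to build a smooth constant mean curvature (CMC) foliation $\{\Sigma_s\}$ of a two-sided neighborhood of $N_{\mathrm{sys}}$ via the implicit function theorem, use area-minimization and the bi-Ricci hypothesis to force the CMC constant $\lambda(s)$ to vanish, and then apply Lemma~\ref{Lem: infinitesimal rigidity} to each leaf to conclude that it is totally geodesic. Taking any sequence $s_i\to 0^+$ then produces the required $N_i:=\Sigma_{s_i}$.

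Set up Fermi coordinates $(y,t)\in N_{\mathrm{sys}}\times(-\varepsilon,\varepsilon)$ and, for small $u\in C^{2,\alpha}(N_{\mathrm{sys}})$, let $\Sigma_u$ be the normal graph of $u$, with mean curvature function $H(u)$. By the rigidity conclusions of Proposition~\ref{Prop: systole inequality} ($N_{\mathrm{sys}}$ is totally geodesic with $\Ric_M(\nu,\nu)=0$), the linearization at $u=0$ is
\[
dH|_{u=0}=-\Delta_{N_{\mathrm{sys}}}-\bigl(\Ric_M(\nu,\nu)+\|A\|^2\bigr)=-\Delta_{\mathbb{S}^n},
\]
whose kernel and cokernel are the constants. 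Applying the implicit function theorem modulo constants produces a smooth family $\{u_s\}_{|s|<\delta}$ with $u_0\equiv 0$, $\int_{N_{\mathrm{sys}}}u_s\,d\sigma=s\,|N_{\mathrm{sys}}|$, $\partial_s u_s\big|_{s=0}\equiv 1$, and $H(u_s)\equiv\lambda(s)$ constant on $\Sigma_s:=\Sigma_{u_s}$. The leaves foliate a neighborhood of $N_{\mathrm{sys}}$ and all represent $[N_{\mathrm{sys}}]\in H_n(M)$.

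The systolic identity forces $|\Sigma_s|\ge|\mathbb{S}^n|=|\Sigma_0|$, and the first variation
\[
\tfrac{d}{ds}|\Sigma_s|=\lambda(s)\int_{\Sigma_s}\phi_s\,d\sigma
\]
(with $\phi_s>0$ the foliation's normal speed) then yields $\lambda(s)\ge 0$ for $s>0$ and $\lambda(s)\le 0$ for $s<0$. To upgrade this sign to $\lambda\equiv 0$, I apply the bi-Ricci spectral analysis of Lemma~\ref{Lem: infinitesimal rigidity} to each CMC leaf $\Sigma_s$, taking $\phi_s$ as the positive test function and exploiting the deformation identity
\[
-\Delta_{\Sigma_s}\phi_s-\bigl(\Ric_M(\nu_s,\nu_s)+\|A_s\|^2\bigr)\phi_s=\lambda'(s).
\]
Combining $\bRic\ge n-1$ with the CMC version of the Gauss equation gives a perturbed spectral Ricci inequality for $\phi_s$ on $\Sigma_s$. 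Theorem~\ref{Thm: spectral comparison}, in the sharpened integral form of Remark~\ref{Rem: integral estimate}, then yields $|\Sigma_s|\le|\mathbb{S}^n|+O(\lambda(s))$; combined with the reverse systole bound this forces $\lambda\equiv 0$. Each leaf is thus a stable minimal hypersurface of area $|\mathbb{S}^n|$, so the equality case of Lemma~\ref{Lem: infinitesimal rigidity} applies and gives that $\Sigma_s$ is totally geodesic.

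The main obstacle is the $\lambda(s)A_{11}$-type correction appearing in the CMC Gauss equation, which prevents a clean direct application of Theorem~\ref{Thm: spectral comparison} at the leaves and must be absorbed as $s\to 0^+$. A more robust alternative, matching the framework of Liu \cite{Liu2013} and Chodosh--Eichmair--Moraru \cite{ChodoshEichmairMoraru2019} highlighted in the introduction, is a barrier/minimization argument: whenever $\lambda(s_0)>0$ the leaf $\Sigma_{s_0}$ is strictly mean-convex into the slab $\{0<t<s_0\}$, and minimizing area in the class $[N_{\mathrm{sys}}]$ within this slab produces (for $n+1\le 7$) a smooth minimal hypersurface in the interior of the slab, to which Lemma~\ref{Lem: infinitesimal rigidity} applies directly to yield a totally geodesic $N_i$ for each small $s_0$, bypassing the need to establish $\lambda\equiv 0$.
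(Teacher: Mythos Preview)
Your barrier alternative has a real gap: the inner boundary $N_{\mathrm{sys}}$ of the slab is only minimal, not strictly mean-convex, so nothing prevents the area-minimizer in the closed slab from being $N_{\mathrm{sys}}$ itself --- and since $|N_{\mathrm{sys}}|=\sys_n(M,g)$, it \emph{is} a minimizer in the class. The mean-convexity of $\Sigma_{s_0}$ only bars contact at the outer end; it gives you no leverage at $t=0$. Your first route is likewise incomplete: the deformation identity contributes a $\lambda'(s)$ term whose sign you do not control, and together with the $\lambda(s)A_{11}$ correction you obtain at best a spectral Ricci lower bound of the form $(n-1)-O(|\lambda'(s)|)-O(|\lambda(s)|\,\|A_s\|)$, hence $|\Sigma_s|\le|\mathbb S^n|+O(|\lambda'(s)|+\ldots)$. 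Combined with $|\Sigma_s|\ge|\mathbb S^n|$ this does not force $\lambda\equiv 0$ by any evident inequality. In fact the CMC-foliation argument you sketch is precisely what the paper runs in the \emph{final} step of the proof of Theorem~\ref{Thm: main}, but only after $\Ric\ge 0$ has been established everywhere via Lemma~\ref{Lem: everywhere minimizing}; that lemma in turn rests on Proposition~\ref{Prop: totally geodesic approximation}, so invoking it here would be circular.

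The paper resolves exactly this difficulty with a conformal metric-deformation trick. After cutting along $N_{\mathrm{sys}}$ to obtain $(\hat M,\hat g)$ with boundary component $\hat N$, one picks a point $\hat p$ at distance $s$ from $\hat N$ and minimizes area for $\hat g_{r,t}=e^{-2t(r^2-\hat\rho^2)^5}\hat g$, which is strictly smaller than $\hat g$ inside the ball $\hat B_r(\hat p)$. This forces the $\hat g_{r,t}$-minimizer $\hat N_{\mathrm{min},t}$ to enter $\hat B_r(\hat p)$, hence to differ from $\hat N$. The delicate point is that the $t\to 0^+$ limit $\hat N_{\mathrm{min}}$ might collapse back onto $\hat N$; this is ruled out by a contradiction argument using Lemmas~\ref{Lem: biRicci increase} and~\ref{Lem: Ricci increase}, which show that on a thin annulus the conformal change improves the ambient $\bRic$ by strictly more than it can worsen the intrinsic Ricci of a hypersurface that is $C^1$-close to $\hat N$, yielding a \emph{strict} spectral inequality on $\hat N_{\mathrm{min},t}$ that is incompatible with its meeting $\hat B_r(\hat p)$. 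Letting $s\to 0$ then gives the one-sided totally geodesic approximants.
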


To prove this proposition we shall use the {\it metric-deformation trick} inspired by the work of Liu \cite{Liu2013} proving the Milnor conjecture in dimension three and also the work of Chodosh-Eichmair-Moraru \cite{ChodoshEichmairMoraru2019} concerning a rigidity result for complete $3$-manifolds with nonnegative scalar curvature in the presence of minimizing cylinders.

\subsubsection{Preliminaries}
By the compactness of $M$, we let $r_{\mathrm{inj}}$ be the injective radius of $(M,g)$. Given any point $p\in M$ and any constants $r\in (0,r_{\mathrm{inj}})$ and $t\in(0,1)$ we consider the following conformal deformation of $g$:
$$
g_{p,r,t}=e^{-2t(r^2-\rho^2)^5}g,$$
where
$$
\rho=\min\{r,\dist_g(\cdot,p)\}.
$$
It is straightforward to verify that each metric $g_{p,r,t}$ is a $C^{4,\alpha}$-metric on $M$ and that the metrics $g_{p,r,t}$ converge to $g$ in $C^{4,\alpha}$-sense as $t\to 0^+$ when $p$ and $r$ are fixed. Moreover, we have $g_{p,r,t}<g$ in the geodesic ball $B_r(p)$.

\begin{lma}\label{Lem: Hessian bound}
There are constants $0<r_0<r_{\mathrm{inj}}$ and $C_0>0$ such that if $r<r_0$ then we have
\begin{equation*}
\rho|\nabla^2\rho|_g\leq C_0\mbox{ when }\rho<r.
\end{equation*}
\end{lma}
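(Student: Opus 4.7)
The claim concerns the function $\rho(x) = \min\{r, \dist_g(x,p)\}$ on the region $\{\rho < r\}$, where $\rho$ coincides with the smooth distance function $d_g(\cdot,p)$ away from $p$ (since $r < r_{\mathrm{inj}}$). Thus the estimate $\rho|\nabla^2\rho|_g \leq C_0$ reduces to a standard consequence of the Hessian comparison theorem for distance functions, and no serious difficulty arises.

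The plan is as follows. Because $M$ is closed, its sectional curvature is uniformly bounded, say $|\mathrm{Sec}_g| \leq K$ for some $K>0$. First I would choose $r_0 < \min\bigl(r_{\mathrm{inj}},\, \pi/(4\sqrt{K})\bigr)$, so that for any $r<r_0$ and any $p \in M$, the comparison functions appearing below remain smooth and uniformly controlled on $B_r(p)$. On the open set $\{0 < \rho < r\}$, where $\rho = d_g(\cdot, p)$ is smooth, the Hessian comparison theorem yields the two-sided pinching
\[
\sqrt{K}\cot(\sqrt{K}\rho)\,\bigl(g - d\rho \otimes d\rho\bigr) \leq \nabla^2\rho \leq \sqrt{K}\coth(\sqrt{K}\rho)\,\bigl(g - d\rho \otimes d\rho\bigr)
\]
as symmetric bilinear forms, together with $\nabla^2\rho(\nabla\rho, \cdot) = 0$. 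Expanding $\sqrt{K}\cot(\sqrt{K}s) = s^{-1} + O(s)$ and $\sqrt{K}\coth(\sqrt{K}s) = s^{-1} + O(s)$ as $s \to 0^+$, the $(n-1)$ nonzero eigenvalues of $\nabla^2\rho$ are bounded in absolute value by $\rho^{-1} + C_1 \rho$, with $C_1 = C_1(K)$ independent of $p$ and $r$. Multiplying by $\rho$ gives $\rho|\nabla^2 \rho|_g \leq 1 + C_1 \rho^2 \leq C_0$ on $\{0<\rho<r\}$ for a uniform constant $C_0 = C_0(K, r_0, n)$.

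The point $p$ itself is not an issue, since $\rho(p)=0$ and $\nabla^2 \rho$ is only defined away from $p$; the expected $1/\rho$ blow-up of $\nabla^2\rho$ as one approaches $p$ is precisely absorbed by the factor $\rho$. Since the curvature bound $K$ is uniform on $M$, the constants $r_0$ and $C_0$ can be chosen independent of $p$, so the statement follows. The only mild care needed is in choosing $r_0$ small enough to stay strictly inside the focal radius $\pi/\sqrt{K}$ of the comparison sphere, which is ensured by the choice $r_0 < \pi/(4\sqrt{K})$.
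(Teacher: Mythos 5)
Your proof is correct and matches the paper's approach: the paper derives the two-sided bound $\sqrt{\Lambda}\cot(\sqrt{\Lambda}s)\le\lambda_{\min}\le\lambda_{\max}\le\sqrt{\Lambda}\coth(\sqrt{\Lambda}s)$ directly from the Riccati equation for the second fundamental form of geodesic spheres, which is precisely the proof of the Hessian comparison theorem you cite as a black box. The only cosmetic difference is that you invoke the comparison theorem while the paper spells out the underlying ODE argument (and the number of nonzero eigenvalues of $\nabla^2\rho$ is $n$, not $n-1$, since $\dim M = n+1$).
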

\begin{proof}
Notice that the Hessian $\nabla^2\rho$ is just the second fundamental form $A$ of the geodesic spheres when $\rho<r$. From the Ricatti equation we have
$$
\left(\frac{\mathrm d}{\mathrm ds}A\right)(\cdot,\cdot)=-\Rm(\cdot,\gamma',\gamma',\cdot)-A^2(\cdot,\cdot)
$$
along any minimizing geodesic $\gamma=\gamma(s):[0,r_{\mathrm{inj}})\to (M,g)$ starting from the point $p$. Since $(M,g)$ is closed, we can assume $\|\Rm\|\leq \Lambda$ for some universal constant $\Lambda$ independent of $p$ and $r$. Denote $\lambda_{\mathrm{min}}$ and $\lambda_{\mathrm{max}}$ to be the minimal and maximal eigenvalues of $A$ respectively. It is clear that $\lambda_{\mathrm{min}}$ and $\lambda_{\mathrm{max}}$ are locally Lipschitz functions satisfying
$$
\lambda_{\mathrm{min}}'\geq -\Lambda-\lambda_{\mathrm{min}}^2\mbox{ and }\lambda_{\mathrm{max}}'\leq \Lambda-\lambda_{\mathrm{max}}^2.
$$
Using the fact $\lambda_{\mathrm{min}}\sim \lambda_{\mathrm{max}}\sim s^{-1}$ as $s\to 0$ we can derive
$$
\sqrt\Lambda\cdot \frac{\cos{\sqrt\Lambda s}}{\sin{\sqrt\Lambda s}}\leq \lambda_{\mathrm{min}}\leq \lambda_{\mathrm{max}}\leq \sqrt\Lambda\cdot \frac{\cosh(\sqrt\Lambda s)}{\sinh(\sqrt\Lambda s)}
$$
when $0<s<\pi/\sqrt\Lambda$. By taking $r_0=\pi/\sqrt{2\Lambda}$ we can determine the constant $C_0$ correspondingly from above inequality and this completes the proof.
\end{proof}
In the following, we always assume $r<r_0$ such that Lemma \ref{Lem: Hessian bound} holds.
\begin{lma}\label{Lem: biRicci increase}
There exists a constant $\theta\in (0,1)$ independent of $p$, $r$ and $t$ such that it holds in the annulus region $\mathcal A_{\theta,r}(p):=B_r(p)\setminus \overline{B_{\theta r}(p)}$ that
$$
e^{-nt(r^2-\rho^2)^5}\cdot \bRic_{g_{p,r,t}}(v_{p,r,t},w_{p,r,t})\geq  \bRic_g(v,w)+100tr^2(r^2-\rho^2)^3,
$$
where $(v,w)$ is any $g$-orthonormal pair and $(v_{p,r,t},w_{p,r,t})$ is the $g_{p,r,t}$-unit normalization of $(v,w)$.
\end{lma}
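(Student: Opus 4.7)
Setting $f := t(r^2-\rho^2)^5$ so that $g_{p,r,t}=e^{-2f}g$, the plan is to expand $\bRic_{g_{p,r,t}}(\tilde v,\tilde w)$ using the conformal change formulas for Ricci and Riemann collected in Appendix~\ref{Appendix}, to isolate a positive leading correction of order $tr^2(r^2-\rho^2)^3$ coming from $\Delta f$, and to absorb every remaining error by taking $\theta\in(0,1)$ close enough to $1$. Writing $(\tilde v,\tilde w):=(v_{p,r,t},w_{p,r,t})=(e^f v,e^f w)$, a direct combination of the appendix identities yields, for any $g$-orthonormal pair $(v,w)$,
\begin{equation*}
e^{-2f}\,\bRic_{g_{p,r,t}}(\tilde v,\tilde w) \;=\; \bRic_g(v,w) + n\bigl(f_{vv}+f_{ww}+f_v^2+f_w^2\bigr) + 2\Delta f - (2n-1)|\nabla f|^2.
\end{equation*}

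Next I would analyse the derivatives of $f$ on $\{0<\rho<r\}$. A direct computation gives $\nabla f = -10t\rho(r^2-\rho^2)^4\nabla\rho$, $|\nabla f|^2 = 100\,t^2\rho^2(r^2-\rho^2)^8$, and
\begin{equation*}
\nabla^2 f = 80\,t\rho^2(r^2-\rho^2)^3\,\nabla\rho\otimes\nabla\rho - 10\,t(r^2-\rho^2)^4\bigl(\nabla\rho\otimes\nabla\rho + \rho\,\nabla^2\rho\bigr).
\end{equation*}
The first term is positive semidefinite with trace $\sim tr^2(r^2-\rho^2)^3$, while the second has pointwise operator norm at most $10(1+C_0)\,t(r^2-\rho^2)^4$ by Lemma~\ref{Lem: Hessian bound}. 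Consequently, for a constant $C_1$ depending only on $C_0$ and $n$, one has $2\Delta f \geq 160\,t\rho^2(r^2-\rho^2)^3 - C_1 t(r^2-\rho^2)^4$, $f_{vv}+f_{ww}\geq -C_1 t(r^2-\rho^2)^4$, $f_v^2+f_w^2\geq 0$, and $(2n-1)|\nabla f|^2 = O\bigl(t^2 r^2(r^2-\rho^2)^8\bigr)$.

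Substituting these into the identity above and using $\rho\geq \theta r$ and $r^2-\rho^2\leq(1-\theta^2)r^2$ on $\mathcal A_{\theta,r}(p)$, every error term is controlled by a universal constant times $(1-\theta^2)\,tr^2(r^2-\rho^2)^3$, while the $|\nabla f|^2$ contribution carries an additional small factor $t(r^2-\rho^2)^5$. Choosing $\theta\in(0,1)$ sufficiently close to $1$ (and, if necessary, shrinking $r_0$), the correction satisfies
\begin{equation*}
n\bigl(f_{vv}+f_{ww}+f_v^2+f_w^2\bigr)+2\Delta f-(2n-1)|\nabla f|^2 \;\geq\; 150\,tr^2(r^2-\rho^2)^3.
\end{equation*}
Multiplying the first identity through by $e^{(2-n)f}=e^{-nf}\,e^{2f}$ and using $e^{(2-n)f}\geq 1-(n-2)f$ together with the compactness bound $|\bRic_g|\leq C_{\bRic}$, the loss on the $\bRic_g$-side is at most $(n-2)C_{\bRic}\,t(r^2-\rho^2)^5\leq (n-2)C_{\bRic}(1-\theta^2)^2 r_0^2\cdot tr^2(r^2-\rho^2)^3$, which is a small fraction of the positive correction provided $\theta$ is close enough to $1$. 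The desired inequality with constant $100$ then follows.

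The main technical obstacle is the $\nabla^2\rho$-term in $\nabla^2 f$, whose norm blows up like $1/\rho$ near $p$ and which would preclude any such estimate on the full ball $B_r(p)$. Restricting to the outer annulus $\rho\geq\theta r$ multiplies this error by a factor $\rho(r^2-\rho^2)\leq(1-\theta^2)r^3$, so taking $\theta$ near $1$ is precisely the device that lets the leading positive $\Delta f$-contribution overwhelm it.
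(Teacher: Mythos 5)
Your proposal follows essentially the same route as the paper: introduce $f=t(r^2-\rho^2)^5$, compute $\nabla f$ and $\nabla^2 f$ explicitly, invoke the Hessian bound $\rho|\nabla^2\rho|_g\leq C_0$ of Lemma~\ref{Lem: Hessian bound}, plug into the conformal biRicci change formula from Appendix~\ref{Appendix}, and absorb all error terms by taking $\theta$ close to $1$. The single discrepancy is a coefficient slip in the conformal identity: with $\dim M=m=n+1$, formula \eqref{Eq: biRicci change} gives $(m-3)=n-2$ on $(\nabla^2 f(v,v)+\nabla^2 f(w,w))$ and $(v(f)^2+w(f)^2)$, and $-(2m-5)=-(2n-3)$ on $|\nabla f|^2$, whereas you wrote $n$ and $-(2n-1)$. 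Since you immediately lower-bound $f_{vv}+f_{ww}$ by $-C_1t(r^2-\rho^2)^4$, use $f_v^2+f_w^2\geq 0$, and treat $|\nabla f|^2$ as a higher-order error of size $O(t^2r^2(r^2-\rho^2)^8)$, this slip does not affect the validity of your argument (it only changes the unnamed constant $C_1$), but the stated identity should be corrected. Your handling of the conversion from the $e^{-2f}$ to the $e^{-nf}$ prefactor via $e^{(2-n)f}\geq 1-(n-2)f$ and the compactness bound on $\bRic_g$ is a correct and slightly more explicit rendering of what the paper does in one line with $1-Ct(r^2-\rho^2)^5\leq e^{-(n-2)t(r^2-\rho^2)^5}\leq 1$.
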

\begin{proof}
This follows from a straightforward computation. Denote
$$
f=t(r^2-\rho^2)^5.
$$
Then we have the gradient bound
$$
|\nabla_gf|_g\leq Ct\rho(r^2-\rho^2)^4,
$$
and the Hessian estimate
$$
\nabla^2_g f \geq 80t\rho^2(r^2-\rho^2)^3\mathrm d\rho\otimes \mathrm d\rho-Ct(r^2-\rho^2)^4(\mathrm d\rho\otimes \mathrm d\rho+\rho|\nabla^2_g\rho|_g).
$$
Here and in the sequel, we denote $C$ to be an absolute constant independent of $p$, $r$ and $t$, which may be different from line to line. It follows from Lemma \ref{Lem: Hessian bound} that we have
$
\rho|\nabla^2_g\rho|_g\leq C.
$
Using formula \eqref{Eq: biRicci change} we conclude that for any $g$-orthonormal pair $(v,w)$ it holds
\[
\begin{split}
&e^{-nt(r^2-\rho^2)^5}\cdot \bRic_{g_{p,r,t}}(v_{p,r,t},w_{p,r,t})\\
&\qquad\geq e^{-(n-2)t(r^2-\rho^2)^5}\Big (\bRic_g(v,w)+160t\rho^2(r^2-\rho^2)^3-Ct(r^2-\rho^2)^4\Big).
\end{split}
\]
Combining this with $1-Ct(r^{2}-\rho^{2})^{5}\leq e^{-(n-2)t(r^2-\rho^2)^5}\leq1$, we obtain
\[
\begin{split}
&e^{-nt(r^2-\rho^2)^5}\cdot \bRic_{g_{p,r,t}}(v_{p,r,t},w_{p,r,t})\\
&\qquad\geq \bRic_g(v,w)+160t\rho^2(r^2-\rho^2)^3-Ct(r^2-\rho^2)^4.
\end{split}
\]
Take $\theta$ sufficiently close to $1$ such that
$$
C(1-\theta^2)<40\theta^2\mbox{ and }120\theta^2>100.
$$
Then we can guarantee
$$
e^{-nt(r^2-\rho^2)^5}\cdot \bRic_{g_{p,r,t}}(v_{p,r,t},w_{p,r,t})\geq  \bRic_g(v,w)+100tr^2(r^2-\rho^2)^3
$$
in $\mathcal A_{\theta,r}(p)$.
\end{proof}

\begin{lma}\label{Lem: Ricci increase}
Given a positive constant $\Lambda_{\mathrm{cur}}$, there exist constants $\iota>0$ and $\theta\in(0,1)$ such that if $N$ is a smooth embedded hypersurface satisfying $\|A\|_g\leq \Lambda_{\mathrm{cur}}$ and $|\nabla_{N,g}\rho|_g\leq \iota$ in $B_r(p)$, then we have
$$
e^{-nt(r^2-\rho^2)^5}\cdot \Ric^N_{g_{p,r,t}}(v_{p,r,t},v_{p,r,t})\leq \Ric_g^N(v,v)+50tr^2(r^2-\rho^2)^3
$$
in $\mathcal A_{\theta,r}(p)\cap N$,
where $v$ is any $g$-unit vector tangential to $N$ and $v_{p,r,t}$ is the $g_{p,r,t}$-unit normalization of $v$.
\end{lma}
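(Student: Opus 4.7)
The plan is to apply the conformal change formula for Ricci curvature on the $n$-manifold $N$ with its induced metric. Writing $f = t(r^{2}-\rho^{2})^{5}$ so that $g_{p,r,t}=e^{-2f}g$ and $\tilde g|_{N} = e^{-2f|_{N}}(g|_{N})$, the standard $n$-dimensional conformal formula gives
\[
\widetilde{\Ric}{}^{N}(v,v) = \Ric^{N}(v,v) + (n-2)(\nabla^{N})^{2} f(v,v) + \Delta_{N} f + (n-2)\big[(\nabla^{N}_{v} f)^{2} - |\nabla^{N} f|_{g}^{2}\big]
\]
for any $g$-unit $v$ tangent to $N$. Since $v_{p,r,t}=e^{f}v$, one has $e^{-nf}\widetilde{\Ric}{}^{N}(v_{p,r,t},v_{p,r,t}) = e^{-(n-2)f}\widetilde{\Ric}{}^{N}(v,v)$, and the factor $e^{-(n-2)f}=1+O(t(r^{2}-\rho^{2})^{5})$ together with the boundedness of $\widetilde{\Ric}{}^{N}(v,v)$ contributes only an absorbable error of size $C(1-\theta^{2})tr^{4}(r^{2}-\rho^{2})^{3}$. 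The task thus reduces to bounding the four correction terms on the right above by $50 t r^{2}(r^{2}-\rho^{2})^{3}$.

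The key structural input is $|\nabla^{N}\rho|_{g}\leq\iota$, which forces $N$ to be nearly perpendicular to $\nabla\rho$, so that $f|_{N}$ is approximately radial-constant. Using
\[
\nabla f = -10 t\rho(r^{2}-\rho^{2})^{4}\nabla\rho, \quad \nabla^{2} f = \big[80 t\rho^{2}(r^{2}-\rho^{2})^{3} - 10 t(r^{2}-\rho^{2})^{4}\big]\nabla\rho\otimes\nabla\rho - 10 t\rho(r^{2}-\rho^{2})^{4}\nabla^{2}\rho,
\]
one gets immediately $|\nabla^{N} f|_{g}^{2} \leq 100 t^{2}\rho^{2}(r^{2}-\rho^{2})^{8}\iota^{2}$, which is negligible. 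For the tangential Hessian, the Gauss relation
\[
(\nabla^{N})^{2} f(v,v) = \nabla^{2} f(v,v) + A(v,v)\langle\nabla f,\nu\rangle
\]
shows that pairing $v$ with the leading $\nabla\rho\otimes\nabla\rho$ factor produces $\langle v,\nabla\rho\rangle^{2}\leq|\nabla^{N}\rho|_{g}^{2}\leq\iota^{2}$, the $\nabla^{2}\rho$ piece is controlled via Lemma \ref{Lem: Hessian bound}, and the $A$-coupling is bounded by $10\Lambda_{\mathrm{cur}}\, t\rho(r^{2}-\rho^{2})^{4}$. For the induced Laplacian, using $\Delta_{N} f = \Delta f - \nabla^{2} f(\nu,\nu) + H\,\langle\nabla f,\nu\rangle$, the dominant positive piece $80 t\rho^{2}(r^{2}-\rho^{2})^{3}$ in $\Delta f$ cancels against the corresponding piece of $\nabla^{2} f(\nu,\nu)$ up to a factor $1-\langle\nabla\rho,\nu\rangle^{2} = |\nabla^{N}\rho|_{g}^{2}\leq\iota^{2}$, while the remaining contributions are again of order $t(r^{2}-\rho^{2})^{4}$ or are bounded by the mean curvature, $|H|\leq n\Lambda_{\mathrm{cur}}$.

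Inside the annulus $\mathcal{A}_{\theta,r}(p)$ one has $(r^{2}-\rho^{2})\leq(1-\theta^{2})r^{2}$, so every surviving term fits the template $C\big(\iota^{2} + (1-\theta^{2})(1+\Lambda_{\mathrm{cur}})\big)\,t r^{2}(r^{2}-\rho^{2})^{3}$, with $C$ depending only on $n$, $r_{0}$ and $C_{0}$. Selecting $\iota$ small first, and then $\theta$ close to $1$ depending on $\Lambda_{\mathrm{cur}}$, yields the desired bound. The main obstacle, in my view, is the careful bookkeeping: one must identify precisely which error terms shrink with $\iota$ (those involving the tangential component of $\nabla\rho$ along $N$) versus which shrink with $1-\theta^{2}$ (those carrying an extra power of $r^{2}-\rho^{2}$), and verify that the mean curvature coupling $H\langle\nabla f,\nu\rangle$ does not obstruct the argument — it does not, because $|H|$ is bounded by hypothesis while $\langle\nabla f,\nu\rangle$ contains an extra $r^{2}-\rho^{2}$ factor.
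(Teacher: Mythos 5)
Your proposal is correct and follows essentially the same route as the paper: apply the $n$-dimensional conformal change formula to the induced metric on $N$, then bound the intrinsic gradient, Hessian, and Laplacian of $f$ using the smallness of $|\nabla_{N,g}\rho|_g$, the Hessian estimate of Lemma \ref{Lem: Hessian bound}, and the bound $\|A\|_g\leq\Lambda_{\mathrm{cur}}$, observing that the surviving errors scale like $\iota^2 tr^2(r^2-\rho^2)^3$ or $t(r^2-\rho^2)^4$ and are absorbed by choosing $\iota$ small and $\theta$ close to $1$. The only cosmetic difference is that the paper bounds $\nabla^2_{N,g}f$ by first establishing $\rho|\nabla^2_{N,g}\rho|_g\leq C$ via the decomposition $\nabla^2_{N,g}\rho=\nabla_g^2\rho|_N-\langle\nabla_g\rho,\nu\rangle A$, whereas you unpack $(\nabla^N)^2 f$ and $\Delta_N f$ directly through the Gauss-type relations for the Hessian and Laplacian of a restriction — these are the same computation organized differently.
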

\begin{proof}
Since $M$ is closed, we have $\|\Rm^M_g\|_g\leq \Lambda$ for some universal constant $\Lambda$. From the Gauss equation and $\|A\|_g\leq \Lambda_{\mathrm{cur}}$  we know that $\|\Ric_g^N\|\leq C$. Through a direct computation we have
$$
|\nabla_{N,g}f|_g\leq Ct\iota\rho(r^2-\rho^2)^4
$$
for $r>\rho$,
and
$$
|\nabla_{N,g}^2 f|_g\leq 80t\iota^2\rho^2(r^2-\rho^2)^3+Ct\iota^2(r^2-\rho^2)^4+Ct(r^2-\rho^2)^4\rho|\nabla^2_{N,g}\rho|_g.
$$
Using the decomposition
$$
\nabla^2_{N,g}\rho=\nabla_g^2\rho|_N-\langle\nabla_{g}\rho,\nu\rangle A
$$
we obtain
$$\rho|\nabla^2_{N,g}\rho|_g \leq C.$$
The formula \eqref{Eq: Ricci change} then yields
\[
\begin{split}
&e^{-nt(r^2-\rho^2)^5}\cdot\Ric^N_{g_{p,r,t}}(v_{p,r,t},v_{p,r,t})\\
&\qquad\leq e^{-(n-2)t(r^2-\rho^2)^5}\Big (\Ric_g^N(v,v)+Ct\iota^2r^2(r^2-\rho^2)^3+Ct(r^2-\rho^2)^4\Big)\\
&\qquad\leq\Ric_g^N(v,v)+Ct\iota^2r^2(r^2-\rho^2)^3+Ct(r^2-\rho^2)^4.
\end{split}
\]
By taking $\theta$ sufficiently close to $1$ and $\iota$ small enough we can guarantee
$$
Ct\iota^2r^2(r^2-\rho^2)^3+Ct(r^2-\rho^2)^4\leq 50tr^2(r^2-\rho^2)^3.
$$
This completes the proof.
\end{proof}

\subsubsection{Proof of Proposition \ref{Prop: totally geodesic approximation}}\label{Proposition 2.6}
In the following, we are going to construct totally geodesic hypersurfaces arbitrarily close to $N_{\mathrm{sys}}$ on one side. In order to guarantee the one-sideness, we cut the manifold $M$ along the hypersurface $N_{\mathrm{sys}}$. Namely, we consider the metric completion $(\hat M,\hat g)$ of the complement $(M\setminus N_{\mathrm{sys}},g)$.

Recall that $N_{\mathrm{sys}}$ is homologically non-trivial. This implies that $N_{\mathrm{sys}}$ is non-separating in $M$ and so $\partial \hat M$ consists of two copies of $N_{\mathrm{sys}}$. From now on, we fix one component of $\partial \hat M$ and denote it by $\hat N$. Our goal is to construct totally geodesic hypersurfaces arbitrarily close to $\hat N$ in $(\hat M,\hat g)$.

Take a minimizing geodesic
$\hat\gamma:[0,\ve)\to (\hat M,\hat g)$
with $\hat\gamma(0)\in \hat N$ orthogonal to $\hat N$ at $\hat\gamma(0)$. By taking $\ve$ small enough we can guarantee
$$\dist_{\hat g}(\hat\gamma(s),\hat N)=s \mbox{ for all }s\in[0,\ve)$$
as well as $2\ve<r_{0}$, where $r_0$ is the constant from Lemma \ref{Lem: Hessian bound}. In particular, we have
\begin{itemize}\setlength{\itemsep}{1mm}
\item the distance function $\dist_{\hat g}(\cdot,\hat \gamma(s))$ is smooth in $\hat B_{2s}(\hat \gamma(s))$;
\item $\hat\gamma(0)$ is the unique nearest point of  $\hat N$ to $\hat\gamma(s)$ for all $s\in[0,\ve)$.
\end{itemize}

Let us work with a fixed $s\in(0,\ve)$ and $\hat p=\hat\gamma(s)$. Denote
$$\hat d=\dist_{\hat g}(\cdot,\hat p)$$ and $\hat \nu$ to be the outward unit normal vector field of $\hat N$ in $(\hat M,\hat g)$.
From our choice of $\hat\gamma$ we have $\langle \nabla_{\hat g} \hat d,\hat \nu\rangle=1$ at the point $\hat\gamma(0)$.

Fix $\theta_0\in(0,1)$ and $\iota_0\in (0,1)$ such that Lemma \ref{Lem: biRicci increase} holds and that Lemma \ref{Lem: Ricci increase} holds with the choice
\begin{equation}\label{Eq: Lambda cur}
\Lambda_{\mathrm{cur}}=\max\|A_{\hat N}\|_{\hat g}+1.
\end{equation}
It follows from the smoothness of $\hat N$ and $\hat d$ that we can guarantee $\langle \nabla_{\hat g} \hat d,\hat \nu\rangle>\sqrt{1-\iota_0^2}$ in an open neighborhood $\mathcal N$ of $\hat\gamma(0)$ on $\hat N$. Recall that $\hat\gamma(0)$ is the unique nearest point of $\hat p$ on $\hat N$. This yields $\dist_{\hat g}(\hat N\setminus \mathcal N,\hat p)>s$ and so there is a constant $\delta\in(0,s)$ such that
\begin{equation}\label{Eq: star shape}
    \langle \nabla_{\hat g} \hat d,\hat \nu\rangle> \sqrt{1-\iota_0^2}\mbox{ on }\hat B_{s+\delta}(\hat p)\cap\hat N.
\end{equation}

Let $r=s+\tau$ with $0<\tau<\delta$ to be determined later. In the following, we consider the conformal metric
$$
\hat g_{r,t}=e^{-2t(r^2-\hat\rho^2)^5}\hat g
$$
where
$$
\hat\rho=\min\{r,\dist_{\hat g}(\cdot,\hat p)\}.
$$
Denote $$\phi:\hat M\to M$$
to be the natural gluing map.
It is easy to verify that when $s$ is small enough the metric $\hat g_{r,t}$ coincides with $\phi^*g_{\phi(\hat p),r,t}$ in $\hat B_r(\hat p)$, where $\hat B_r(\hat p)$ is denoted to be the geodesic $r$-ball centered at point $\hat p$ in $(\hat M,\hat g)$.

Now we are in a position to prove Proposition \ref{Prop: totally geodesic approximation}.

\begin{proof}[Proof of Proposition \ref{Prop: totally geodesic approximation}]
Denote $\mathcal C$ to be the collection of hypersurfaces which enclose a region with $\hat N$. Since $N_{\mathrm{sys}}$ is minimal in $(M,g)$, $\partial \hat M$ is minimal in $(\hat M,\hat g)$ as well. Then it follows from a direct computation and \eqref{Eq: star shape} that the mean curvature $\hat H_{r,t}$ of $\hat N$ in $(\hat M,\hat g_{r,t})$ with respect to the outward unit normal vector field $\hat\nu$ satisfies
$$
\hat H_{r,t}=10ne^{t(r^2-\hat\rho^2)^5}t(r^2-\hat\rho^2)^4\hat\rho\langle \nabla_{\hat g} \hat \rho,\hat\nu\rangle_{\hat g}\geq 0.
$$
Since we have $\dim \hat M=n+1\leq 7$, it follows from geometric measure theory that there is a smooth embedded hypersurface $\hat N_{\mathrm{min},t}$ satisfying
$$\Area_{\hat g_{r,t}}(\hat N_{\mathrm{min},t})=\min_{\hat N'\in\mathcal C}\Area_{\hat g_{r,t}}(\hat N').$$
Notice that we have $\hat g_{r,t}<\hat g$ in $\hat B_r(\hat p)$ and $\hat g_{r,t}\equiv \hat g$ outside $\hat B_r(\hat p)$. By comparison and the homologically area-minimizing property of $N_{\mathrm{sys}}$ in $(M,g)$ we conclude
$$\Area_{\hat g_{r,t}}(\hat N_{\mathrm{min},t})\leq \Area_{\hat g_{r,t}}(\hat N)<\Area_{\hat g}(\hat N)=\min_{\hat N'\in\mathcal C}\Area_{\hat g}(\hat N').$$
From this we know that the hypersurface $\hat N_{\mathrm{min},t}$ has non-empty intersection with $\hat B_r(\hat p)$.

Now let us investigate the convergence of $\hat N_{\mathrm{min},t}$ as $t\to 0^+$. Recall that we have the uniform volume bound
$$\Area_{\hat g_{r,t}}(\hat N_{\mathrm{min},t})\leq \Area_{\hat g}(\hat N)= |\mathbb S^n|.$$
It follows from the curvature estimate \cite{SchoenSimonYau1975,SchoenSimon1981} for stable minimal hypersurfaces and the standard elliptic PDE theory that the hypersurfaces $\hat N_{\mathrm{min},t}$ converges to a smooth minimal hypersurface $\hat N_{\mathrm{min}}$ in $(\hat M,\hat g)$ in the $C^{5,\alpha}$-graphical sense due to the fact $\hat g_{r,t}\to \hat g$ in $C^{4,\alpha}$-sense as $t\to 0^+$. In particular, we conclude that $\hat N_{\mathrm{min}}$ encloses a region with $\hat N$ and so it satisfies
$$|\mathbb S^n|=\sys_n(M,g)\leq \Area_{\hat g}(\hat N_{\mathrm{min}})\leq |\mathbb S^n|.$$
This yields that $\hat N_{\mathrm{min}}$ is a connected, oriented, stable minimal hypersurface in $(\hat M,\hat g)$ and then it follows from Lemma \ref{Lem: infinitesimal rigidity} that $\hat N_{\mathrm{min}}$ is totally geodesic. It is also clear that $\hat N_{\mathrm{min}}$ must intersect the closure of $\hat B_r(\hat p)$ and so we have
\begin{equation}\label{Eq: distance estimate}
    \dist_{\hat g}(\hat N_{\mathrm{min}},\hat N)\leq 2r\leq 4s.
\end{equation}

We have to show that $\hat N_{\mathrm{min}}$ is different from $\hat N$ after taking $\tau$ small enough. Suppose by contradiction that $\hat N_{\mathrm{min}}$ coincides with $\hat N$. This means that $\hat N_{\mathrm{min},t}$ converge to  $\hat N$ in $C^{5,\alpha}$-graphical sense as $t\to 0^+$. Since $\hat N_{\mathrm{min},t}$ are $\hat g_{r,t}$-area-minimizing boundary, the convergence has multiplicity one and $\hat N_{\mathrm{min},t}$ must be connected.

Let $\Lambda_{\mathrm{cur}}$ be the constant from \eqref{Eq: Lambda cur} and recall that we have taken $\theta_0\in(0,1)$ and $\iota_0\in (0,1)$ such that both Lemma \ref{Lem: biRicci increase} and Lemma \ref{Lem: Ricci increase} hold. From the $C^{5,\alpha}$-graphical convergence we can guarantee for $t$ small that
\begin{itemize}
\item the second fundamental form $\hat A_t$ of $\hat N_{\mathrm{min},t}$ in $(\hat M,\hat g)$ satisfies
\begin{equation}\label{Eq: A t}
    \|\hat A_t\|_{\hat g}\leq \Lambda_{\mathrm{cur}},
\end{equation}
\item we have
\begin{equation}\label{Eq: nu t}
    \langle\nabla_{\hat g}\hat d,\hat\nu_t\rangle_{\hat g}>\sqrt{1-\iota_0^2} \mbox{ on }\hat B_r(\hat p)\cap \hat N_{\mathrm{min},t},
\end{equation}
where $\hat \nu_t$ is the unit normal vector field on $\hat N_{\mathrm{min},t}$ with respect to the metric $\hat g$ pointing to $\hat N$,
\item and we have
$$\dist_{\hat g}(\hat p,\hat N_{\mathrm{min},t})>\left(1-\frac{\theta_0}{2}\right)s.$$
\end{itemize}
Fix $\tau$ small (depending only on $\theta_0$ and $s$) such that
$$(1-\theta_0)r = (1-\theta_0)(s+\tau) \leq \left(1-\frac{\theta_0}{2}\right)s.$$
Then we have
\begin{equation}\label{Eq: in annulus}
    \hat N_{\mathrm{min},t}\cap \hat B_r(\hat p)\subset \hat{\mathcal A}_{\theta_0,r}(\hat p),
\end{equation}
where $\hat{\mathcal A}_{\theta_0,r}$ is denoted to be the annulus region $\hat B_r(\hat p)\setminus \overline{{\hat B}_{\theta_0r}(\hat p)}$.

Since $\hat N_{\mathrm{min},t}$ is the $\hat g_{r,t}$-area-minimizing boundary, it must be a two-sided stable minimal hypersurface in $(\hat M,\hat g_{r,t})$. From the stability we have
\[
\int_{\hat{N}_{min,t}}\left(\Ric^{\hat M}_{\hat{g}_{r,t}}(\hat{\nu}_{r,t},\hat{\nu}_{r,t})+\|\hat A_{r,t}\|_{\hat g_{r,t}}^{2}\right)\hat{\vp}^{2}\,\mathrm d\sigma_{\hat{g}_{r,t}}
\leq \int_{\hat{N}_{min,t}}|\hat\nabla_{\hat g_{r,t}}\hat{\vp}|_{\hat g_{r,t}}^{2}\,\mathrm d\sigma_{\hat{g}_{r,t}}
\]
for all $\hat\vp\in C_0^\infty(\hat N_{\mathrm{min},t})$, where $\hat \nu_{r,t}=e^{t(r^2-\hat\rho^2)^5}\hat \nu_t$
is the unit normal vector field and $\hat A_{r,t}$ is the corresponding second fundamental form of $\hat N_{\mathrm{min},t}$ in $(\hat M,\hat g_{r,t})$, and $\hat\nabla_{\hat g_{r,t}}$ is the covariant derivative of $\hat N_{\mathrm{min},t}$ with the induced metric from $(\hat M,\hat g_{r,t})$.   It is clear that we have
\[
\int_{\hat{N}_{min,t}}|\hat\nabla_{\hat g_{r,t}}\hat{\vp}|_{\hat g_{r,t}}^{2}\,\mathrm d\sigma_{\hat{g}_{r,t}}
= \int_{\hat N_{\mathrm{min},t}}e^{-(n-2)t(r^2-\hat\rho^2)^5}|\hat\nabla_{\hat g}\hat\vp|^2_{\hat g}\,\mathrm d\sigma_{\hat g}
\leq \int_{\hat N_{\mathrm{min},t}}|\hat\nabla_{\hat g}\hat\vp|^2_{\hat g}\,\mathrm d\sigma_{\hat g}.
\]
Recall that $\hat g_{r,t}$ coincides with the pullback metric $\phi^*g_{\phi(\hat p),r,t}$ in $\hat B_r(\hat p)$.
By the similar argument of Lemma \ref{Lem: infinitesimal rigidity} (see \eqref{Eq: spectral function-1}), we have
\[
\Ric^{\hat M}_{\hat{g}_{r,t}}(\hat{\nu}_{r,t},\hat{\nu}_{r,t})+\|\hat A_{r,t}\|_{\hat g_{r,t}}^{2}
\geq \bRic^{\hat M}_{\hat{g}_{r,t}}(\hat{\nu}_{r,t},\hat e_{r,t})-\Ric^{\hat N_{\mathrm{min},t}}_{\hat g_{r,t}}(\hat e_{r,t},\hat e_{r,t}),
\]
where $\hat e_t$ is any $\hat g$-unit tangential vector of $\hat N_{\mathrm{min},t}$ and $\hat e_{r,t}=e^{t(r^2-\hat\rho^2)^5}\hat e_t$. Then it follows from Lemma \ref{Lem: biRicci increase} and Lemma \ref{Lem: Ricci increase} as well as \eqref{Eq: A t}-\eqref{Eq: in annulus} that
\[
\begin{split}
    &\int_{\hat N_{\mathrm{min},t}}\left(\Ric^{\hat M}_{\hat{g}_{r,t}}(\hat{\nu}_{r,t},\hat{\nu}_{r,t})+\|\hat A_{r,t}\|_{\hat g_{r,t}}^{2}\right)\hat{\vp}^{2}\,\mathrm d\sigma_{\hat{g}_{r,t}}\\
    &\qquad\geq \int_{\hat N_{\mathrm{min},t}}\left(\bRic^{\hat M}_{\hat{g}_{r,t}}(\hat{\nu}_{r,t},\hat e_{r,t})-\Ric^{\hat N_{\mathrm{min},t}}_{\hat g_{r,t}}(\hat e_{r,t},\hat e_{r,t})\right)\hat{\vp}^{2}\cdot e^{-nt(r^2-\hat\rho^2)^5}\mathrm d\sigma_{\hat{g}}\\
    &\qquad\geq \int_{\hat N_{\mathrm{min},t}}\left(\bRic^{\hat M}_{\hat{g}}(\hat{\nu}_{t},\hat e_{t})-\Ric^{\hat N_{\mathrm{min},t}}_{\hat g}(\hat e_{t},\hat e_{t})+50tr^2(r^2-\hat\rho^2)^3\right)\hat{\vp}^{2}\,\mathrm d\sigma_{\hat{g}},
\end{split}
\]
So the stability inequality actually yields
\begin{equation}\label{Eq: deformed stability}
\int_{\hat N_{\mathrm{min},t}}\left(|\hat\nabla_{\hat g}\hat\vp|^2+\hat\lambda_{\Ric,t}\hat\vp^2\right)\mathrm d\sigma_{\hat g}
\geq \int_{\hat N_{\mathrm{min},t}}(n-1)+50tr^2(r^2-\hat\rho^2)^3\,\mathrm d\sigma_{\hat g},
\end{equation}
where
$$\hat\lambda_t=\hat\lambda_{\Ric,t}(x)=
 \min_{\hat e\in T_{x}\hat N_{\mathrm{min},t}, \, |\hat e|_{\hat g}=1}\Ric^{\hat N_{\mathrm{min},t}}_{\hat g}(\hat e,\hat e).
$$
From \eqref{Eq: deformed stability} we can find a smooth positive function $\hat u$ on $\hat N_{\mathrm{min},t}$ such that
$$-\Delta_{\hat g}^{\hat N_{\mathrm{min},t}}\hat u+\hat\lambda_{\Ric,t}\hat u\geq \left((n-1)+50tr^2(r^2-\hat\rho^2)^3\right)\hat u.$$
Since we have $$\Area_{\hat g}(\hat N_{\mathrm{min},t})\geq \Area_{\hat g}(\hat N)=|\mathbb S^n|,$$
it follows from Theorem \ref{Thm: spectral comparison} that $\hat N_{\mathrm{min},t}$ with the induced metric from $(\hat M,\hat g)$ is isometric to the standard unit $n$-sphere $\mathbb S^n$. This implies $\hat\lambda_{\Ric,t}\equiv (n-1)$ and so we have
$$0=-\int_{\hat N_{\mathrm{min},t}}\Delta_{\hat g}^{\hat N_{\mathrm{min},t}}\hat u\,\mathrm d\sigma_{\hat g}\geq 50\int_{\hat N_{\mathrm{min},t}}tr^2(r^2-\hat\rho^2)^3\hat u\,\mathrm d\sigma_{\hat g}.$$
In particular,  $\hat N_{\mathrm{min},t}$ does not intersect with the geodesic ball $\hat B_r(\hat p)$, which is impossible for $t$ small enough since $\hat N$ intersects with $\hat B_r(\hat p)$ and $\hat N_{\mathrm{min},t}$ converge to $\hat N$ as $t\to 0^+$.

We have constructed a totally geodesic hypersurface $\hat N_{\mathrm{min}}\neq \hat N$ in $(\hat M,\hat g)$ satisfying $\dist_{\hat g}(\hat N_{\mathrm{min},t},\hat N)\leq 4s$. By letting $s\to 0^+$ we can construct a sequence of totally geodesic hypersurfaces (on one side of $\hat N$ automatically) converging to $\hat N$, which completes the proof.
\end{proof}

\begin{lma}\label{Lem: curvature characterization}
Let $\{e_i\}_{i=1}^n$ be an orthonormal frame of $T_pN_{\mathrm{sys}}$ and $\nu$ be a unit normal vector field on $N_{\mathrm{sys}}$. Then we have $$\Rm_M(e_i,e_j,e_k,e_l)=\delta_{jk}\delta_{il}-\delta_{ik}\delta_{jl}$$ and
$$
\Rm_M(e_i,\nu,e_j,e_k)=\Rm_M(e_i,\nu,\nu,e_j)=0.
$$
\end{lma}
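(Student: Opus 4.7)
The plan is to prove the three identities separately. The first two drop out of the Gauss and Codazzi equations combined with the rigidity already established in Proposition \ref{Prop: systole inequality}; the third requires the totally geodesic approximation produced in Proposition \ref{Prop: totally geodesic approximation}.

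Since $N_{\mathrm{sys}}$ is isometric to the round unit sphere $\mathbb{S}^n$ and is totally geodesic in $(M,g)$, the Gauss equation
\[
\Rm_M(e_i,e_j,e_k,e_l)=\Rm_{N_{\mathrm{sys}}}(e_i,e_j,e_k,e_l)-A_{ik}A_{jl}+A_{il}A_{jk}
\]
immediately reduces to $\Rm_M(e_i,e_j,e_k,e_l)=\delta_{jk}\delta_{il}-\delta_{ik}\delta_{jl}$, giving the first identity. For the second identity, the Codazzi equation
\[
\Rm_M(e_i,e_j,e_k,\nu)=(\nabla^{N_{\mathrm{sys}}}_{e_i}A)(e_j,e_k)-(\nabla^{N_{\mathrm{sys}}}_{e_j}A)(e_i,e_k)
\]
vanishes identically because $A\equiv 0$; combined with the Riemann tensor symmetry $\Rm_M(e_i,\nu,e_j,e_k)=\Rm_M(e_j,e_k,e_i,\nu)$, this gives $\Rm_M(e_i,\nu,e_j,e_k)=0$.

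The main work is the third identity $\Rm_M(e_i,\nu,\nu,e_j)=0$. Let $\hat N_\ell\to\hat N$ be the sequence of totally geodesic hypersurfaces from Proposition \ref{Prop: totally geodesic approximation}, all lying on one side of $\hat N$ in $(\hat M,\hat g)$. In normal exponential coordinates $(x,t)\in \hat N\times[0,\ve)$ off $\hat N$, the metric takes the form $\hat g=dt^2+h_{ij}(x,t)\,dx^i\,dx^j$ with $h_{ij}(x,0)$ the round metric on $\hat N\cong\mathbb{S}^n$. Write each $\hat N_\ell$ as a graph $t=f_\ell(x)$, with $f_\ell>0$ and $f_\ell\to 0$ in $C^{5,\alpha}$. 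Denote by $B(x,t)$ the second fundamental form of the slice $\{t'=t\}$; total geodesy of $\hat N$ gives $B(\cdot,0)\equiv 0$, and the Riccati equation along $\partial_t$ yields $\partial_t B_{jk}(\cdot,0)=-\Rm_M(e_j,\nu,\nu,e_k)$. Combined with the standard graph formula for the second fundamental form, this produces the expansion
\[
A^f_{jk}=-\mathrm{Hess}(f)_{jk}-f\,\Rm_M(e_j,\nu,\nu,e_k)+\mathcal{N}_{jk}(f,\nabla f,\nabla^2 f)
\]
at $t=0$, where $\mathrm{Hess}$ is taken with respect to the round metric on $\hat N$ and $\mathcal{N}_{jk}$ collects terms at least quadratic in $(f,\nabla f,\nabla^2 f)$.

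Setting $A^{f_\ell}\equiv 0$, writing $s_\ell:=\max_{\hat N}f_\ell$ and $\phi_\ell:=f_\ell/s_\ell\in[0,1]$ with $\max\phi_\ell=1$, division by $s_\ell$ gives
\[
\mathrm{Hess}(\phi_\ell)_{jk}+\phi_\ell\,\Rm_M(e_j,\nu,\nu,e_k)=O(s_\ell)
\]
on $\hat N$. Uniform Schauder estimates combined with the $C^0$ bound $\phi_\ell\leq 1$ give $C^{2,\alpha}$-bounds, so a subsequence $\phi_\ell\to u$ smoothly, with $u\geq 0$, $\max u=1$, and
\[
\mathrm{Hess}(u)_{jk}+u\,\Rm_M(e_j,\nu,\nu,e_k)=0.
\]
Taking the trace and using $\Ric_M(\nu,\nu)=0$ from Proposition \ref{Prop: systole inequality} gives $\Delta_{\mathbb{S}^n}u=0$, so $u$ is a positive constant on $\mathbb{S}^n$; the Hessian then vanishes identically, forcing $\Rm_M(e_j,\nu,\nu,e_k)=0$ on $\hat N$, and pulling back through the gluing map produces the third identity on $N_{\mathrm{sys}}$. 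The main obstacle I anticipate is the careful derivation of the graph expansion for $A^f_{jk}$ and the verification that $\mathcal{N}_{jk}$ is genuinely quadratic in $(f,\nabla f,\nabla^2 f)$ with constants controlled by the ambient geometry; once that is in hand, the Schauder regularity and the rigidity of $\ker\Delta_{\mathbb{S}^n}$ close the argument.
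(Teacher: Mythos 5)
Your plan follows essentially the same route as the paper: Gauss and Codazzi for the first two identities, then graph the totally geodesic approximants from Proposition~\ref{Prop: totally geodesic approximation} over $N_{\mathrm{sys}}$, linearize via the Riccati equation, rescale by $\max f_\ell$, pass to the limit, and invoke $\Ric(\nu,\nu)=0$ together with the rigidity of harmonic functions on $\mathbb S^n$. The one place where your sketch, taken literally, would not close is the Schauder step, and you are right to flag it as the main obstacle. Characterizing $\mathcal N_{jk}$ as ``at least quadratic in $(f,\nabla f,\nabla^2 f)$'' is misleading: the dominant error term is $\nabla^2 f(\nabla f,\cdot)\otimes df$, which is \emph{linear} in $\nabla^2 f$ with small coefficient $\nabla f\otimes\nabla f$. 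After dividing by $s_\ell$ this is $\nabla^2\phi_\ell$ contracted against $\nabla f_\ell\otimes\nabla f_\ell$, so estimating it naively against $\|\phi_\ell\|_{C^{2,\alpha}}$ produces circularity. It must instead be absorbed into the principal part as an $o(1)$ perturbation of the operator. The paper does exactly this by first working with the \emph{minimal} equation --- the trace of your totally geodesic system --- rewritten as $\langle g_N+o(1),\nabla^2_N f_\ell\rangle = O(f_\ell)$, which makes the uniform ellipticity explicit and yields $C^{2,\alpha}$-bounds on $\eta_\ell := f_\ell/\max f_\ell$ (together with a Harnack lower bound on $\eta_\ell$, which you bypass via the $\max\phi_\ell=1$ normalization, and that is fine), before returning to the full tensor equation to extract $\Rm_M(\cdot,\nu,\nu,\cdot)=0$. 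So the approach is the same; the missing detail is precisely the reorganization that makes the elliptic bootstrap go through.
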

\begin{proof}
Since $N_{\mathrm{sys}}$ is totally geodesic and $N_{\mathrm{sys}}$ is isometric to $\mathbb{S}^n$, we have
$$\Rm_M(e_i,e_j,e_k,e_l)=\Rm_{N_{\mathrm{sys}}}(e_i,e_j,e_k,e_l)=\delta_{jk}\delta_{il}-\delta_{ik}\delta_{jl}$$
and
$$\Rm_M(e_i,\nu,e_j,e_k)=(\nabla_{e_k} A)(e_j,e_i)-(\nabla_{e_j} A)(e_k,e_i)=0.$$
It remains to show $\Rm_M(e_i,\nu,\nu,e_j)=0$.

Let us denote $N=N_{\mathrm{sys}}$ for short.
From Proposition \ref{Prop: totally geodesic approximation} we can construct a sequence of totally geodesic hypersurfaces $N_i$ on one side of $N$ converging to $N$ smoothly. After choosing a suitable unit normal vector field $\nu_N$ on $N$, we can write $N_i$ as graphs over $N$ with positive graph functions $f_i$ such that
$$f_i=o(1)\mbox{ as }i\to\infty.$$
Here and in the sequel, the notation $f_i=o(\cdot)$ means $\|f_i\|_{C^k}=o(\cdot)$ for all $k$ and the same convention is made for the notation $O(\cdot)$.
In particular, we can assume that all the hypersurfaces $N_i$ lie in a tubular $\ve$-neighborhood $\mathcal N$ of $N$ in $(M,g)$.

In the following, we take $t$ to be the smooth signed distance function to $N$ defined in $\mathcal N$ such that $N_i\subset\{t>0\}$. We consider the function
$$F_i:N\times[0,\ve)\to \mathbb R, \ \, (x,t)\mapsto t-f_i(x).$$ It is easy to see $N_i=\{F_i=0\}$ and so the minimal equation can be written as
$$\left.\Div_g\left(\frac{\nabla_g F_i}{|\nabla_gF_i|_g}\right)\right|_{F_i=0}=0,$$
or equivalently
\begin{equation}\label{Eq: minimal equation}
    \Delta_g F_i=|\nabla_gF_i|_g^{-2}\cdot\nabla_g^2F_i(\nabla_gF_i,\nabla_gF_i).
\end{equation}

We would like to compute an explicit form of this equation. For convenience, we shall denote $\nabla_t$, $\Delta_t$ and $A_t$ to be the covariant derivative, the Laplace-Beltrami operator and the second fundamental form of $N\times\{t\}$, respectively.

Since $t$ is a distance function, we can compute
$$\nabla^2_gF_i(\partial_t,\partial_t)=0, \ \,\nabla^2_gF_i(\partial_t,X)=A_t(\nabla_tf_i,X),$$
and
$$\nabla^2_gF_i(X,Y)=-\nabla_t^2 f_i(X,Y)+A_t(X,Y),$$
where $X$ and $Y$ are any vectors orthogonal to $\partial_t$. Using these expressions we can write the equation \eqref{Eq: minimal equation}  as
\begin{equation}\label{Eq: arrangement of minimal equation}
\begin{split}
&\Delta_{f_i(x)}f_i-H_{f_i(x)}=\left(1+|\nabla_{f_i(x)}f_i|^2\right)^{-1}\\
&\qquad\qquad\cdot\Big(A_{f_i(x)}(\nabla_{f_i(x)}f_i,\nabla_{f_i(x)}f_i)+\nabla_{f_i(x)}^2f_i(\nabla_{f_i(x)}f_i,\nabla_{f_i(x)}f_i)\Big).
\end{split}
\end{equation}
Using the fact $f_i=o(1)$ and the smoothness of $g$ as well as the totally-geodesic property of $N$ it is easy to verify
$$\Delta_{f_i(x)}f_i=\langle g_N+o(1),\nabla_N^2f_i\rangle+O(f_i),$$
$$H_{f_i(x)}=\int_0^{f_i(x)}\frac{\partial H_t}{\partial t}\,\mathrm dt=O(f_i),$$
$$A_{f_i}=O(f_i) \mbox{ and } \nabla_{f_i(x)}^2 f_i=\nabla_N^2 f_i+O(f_i).$$
Therefore the equation \eqref{Eq: arrangement of minimal equation} can be written as
$$\langle g_N+o(1),\nabla_N^2 f_i\rangle=O(f_i).$$
Using the Harnack inequality we see
$$\max_N f_i\leq \Lambda\min_Nf_i$$
for a universal positive constant $\Lambda$ independent of $i$. Define
$$\eta_i=\frac{f_i}{\max_Nf_i}.$$
Then $\eta_i$ is bounded from below by a positive constant and it satisfies
$$
\langle g_N+o(1),\nabla_N^2\eta_i\rangle =O(1)\mbox{ as }i\to\infty.$$
From the standard elliptic PDE theory we obtain $\eta_i=O(1)$ as $i\to\infty$. Up to a subsequence the functions $\eta_i$ converge to a smooth positive function $\eta$ on $N$ in $C^2$-sense. Using the further expansion
$$H_{f_i(x)}=-\Big(\Ric(\nu_N,\nu_N)+\|A_N\|^2\Big )f_i(x)+o(f_i(x))\mbox{ as }i\to\infty,$$
we see that $\eta$ satisfies the equation
\begin{equation*}
\Delta_N\eta+\Big(\Ric(\nu_N,\nu_N)+\|A_N\|^2\Big )\eta=0.
\end{equation*}
Recall that the normal Ricci curvature $\Ric(\nu_N,\nu_N)$ and the second fundamental form $A_N$ of $N$ both vanish. This yields that $\eta$ is a positive constant function.

Now we use the fact that each $N_i$ is totally geodesic, which yields
$$\nabla_g^2F_i=|\nabla_gF_i|^{-2}_{g}\Big(\nabla_g^2F_i(\nabla_gF_i,\cdot)\otimes\mathrm dF_i\Big).$$
Restricted to the tangential subspace orthogonal to $\partial_t$ we obtain
$$\nabla_{f_i(x)}^2 f_i-A_{f_i(x)}=\nabla_{f_i(x)}^2f_i(\nabla_{f_i(x)}f_i,\cdot)\otimes \mathrm df_i.$$
Dividing this equation by $\max_Nf_i$ and using the facts $\eta_i\to \eta\equiv \mathrm{const.}$ and $f_i=o(1)$ as well as the estimate
$$A_{f_i(x)}=\Rm_M(\cdot,\nu_N,\nu_N,\cdot)f_i(x)+o(f_i(x))\mbox{ when }A_N=0,$$
we conclude that
$$\Rm_M(X,\nu_N,\nu_N,Y)=0$$
for all vectors $X,Y$ tangential to $N$. This completes the proof.
\end{proof}

\begin{lma}\label{Lem: everywhere minimizing}
For any point $p\in M$ there is a homologically non-trivial smooth embedded hypersurface $N$ passing through $p$, which satisfies $\Area_g(N)=|\mathbb S^n|$ and the homologically area-minimizing property. In particular, the sectional curvatures of $(M,g)$ is nonnegative everywhere.
\end{lma}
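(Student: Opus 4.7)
The plan is to show that every point of $M$ lies on a leaf sharing all the properties of $N_{\mathrm{sys}}$ and then to read off nonnegative sectional curvature pointwise. Define
\[
\mathcal{U}:=\big\{p\in M \,:\, \exists\text{ smooth embedded }N\ni p \text{ with }[N]\neq 0,\ \Area_g(N)=|\mathbb{S}^n|,\ N\text{ area-minimizing in }[N]\big\}.
\]
Proposition~\ref{Prop: systole inequality} gives $N_{\mathrm{sys}}\subset\mathcal{U}$, so $\mathcal U$ is nonempty, and I will show it is both open and closed in $M$.

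For closedness, suppose $p_k\in\mathcal U$ with $p_k\to p$ and leaves $N_k\ni p_k$. Lemma~\ref{Lem: infinitesimal rigidity} implies each $N_k$ is totally geodesic and isometric to $\mathbb S^n$, with vanishing second fundamental form and area $|\mathbb S^n|$. Standard compactness for smooth stable minimal hypersurfaces with uniform area and curvature bounds yields smooth subsequential convergence $N_k\to N_\infty$ with $p\in N_\infty$; smooth convergence preserves the nontrivial homology class, and $\Area_g(N_\infty)=|\mathbb S^n|=\sys_n(M,g)$ forces area-minimality, so $p\in\mathcal U$.

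For openness, fix $p\in\mathcal U$ with leaf $N$. Since $N$ inherits from Lemma~\ref{Lem: infinitesimal rigidity} every hypothesis used on $N_{\mathrm{sys}}$ in Proposition~\ref{Prop: totally geodesic approximation}, I apply that proposition with $N$ in place of $N_{\mathrm{sys}}$ on each side after cutting $M$ along $N$ (which is non-separating since $[N]\neq 0$), yielding sequences of totally geodesic hypersurfaces $N_i^\pm$ converging to $N$ from both sides. Each $N_i^\pm$ is stable, two-sided, and homologous to $N$, so Lemma~\ref{Lem: infinitesimal rigidity} and $\sys_n(M,g)=|\mathbb S^n|$ force $\Area_g(N_i^\pm)=|\mathbb S^n|$ and area-minimality, placing $N_i^\pm$ inside $\mathcal U$. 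Iterating this construction on each new leaf produces a family whose union accumulates on every point of a tubular neighborhood of $N$, and closedness of $\mathcal U$ upgrades accumulation to membership, giving a neighborhood of $p$ inside $\mathcal U$. By connectedness of $M$, $\mathcal{U}=M$.

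To finish, fix $p\in M$ and a leaf $N\ni p$ with unit normal $\nu$ and orthonormal frame $\{e_i\}_{i=1}^n$ of $T_pN$. Lemma~\ref{Lem: curvature characterization} gives
\[
\Rm_M(e_i,e_j,e_k,e_l)=\delta_{jk}\delta_{il}-\delta_{ik}\delta_{jl},\qquad \Rm_M(e_i,\nu,e_j,e_k)=\Rm_M(e_i,\nu,\nu,e_j)=0.
\]
For any 2-plane $\Pi\subset T_pM$ the dimension count $\dim(\Pi\cap T_pN)\geq 1$ supplies a unit $X_1\in\Pi\cap T_pN$; choosing $e_1=X_1$ and $e_2$ parallel to the tangential part of a unit $X_2\in\Pi$ orthogonal to $X_1$ gives $X_2=ae_2+b\nu$ with $a^2+b^2=1$, whence the symmetries of $\Rm_M$ together with the identities above collapse the computation to $K(\Pi)=\Rm_M(X_1,X_2,X_2,X_1)=a^2\in[0,1]$. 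The main obstacle is the openness step, specifically turning the discrete families of leaves provided by Proposition~\ref{Prop: totally geodesic approximation} into coverage of a full tubular neighborhood; my approach is to iterate that proposition on each new leaf to densify the family along the normal direction and then to invoke closedness of $\mathcal U$ to pass from density to actual membership.
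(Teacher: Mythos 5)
Your overall strategy (show the set $\mathcal U$ of points through a good leaf is nonempty, open and closed) is a reasonable reformulation, and your closedness argument and the final sectional-curvature computation from Lemma~\ref{Lem: curvature characterization} are both correct. However, the openness step contains a genuine gap. You apply Proposition~\ref{Prop: totally geodesic approximation} to $N$ to get a sequence of leaves $N_i^\pm$ converging to $N$ from each side, then assert that \emph{iterating} this on each new leaf produces a family accumulating on every point of a tubular neighborhood. That assertion is not justified and is not obviously true: Proposition~\ref{Prop: totally geodesic approximation} only produces, for a given leaf $N'$, a sequence of leaves whose distances to $N'$ tend to $0$, with no control on how those distances are distributed. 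If one tracks the intersections of the growing family of leaves with a fixed normal geodesic, one gets a set $D\subset\mathbb R$ that is successively enlarged by adjoining sequences converging to its existing points; nothing forces $D$ (or its closure) to fill an interval rather than remain a Cantor-like set with gaps, and closedness of $\mathcal U$ only tells you that $\overline D\subset D$, not that $D$ has no gaps. So ``density in a tubular neighborhood'' does not follow from the iteration.

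The paper sidesteps this entirely. It works in the cut manifold $(\hat M,\hat g)$, takes the collection $\mathcal C$ of \emph{all} hypersurfaces of area $|\mathbb S^n|$ enclosing a region with $\hat N$, observes via Lemma~\ref{Lem: infinitesimal rigidity} that every element of $\mathcal C$ is totally geodesic so that $\mathcal C$ is $C^\infty$-compact, and then for an arbitrary $\hat p\in\hat M$ picks the element $\hat N_{\hat p}\in\mathcal C$ at least distance to $\hat p$. If that least distance $\mathfrak d$ were positive, Proposition~\ref{Prop: totally geodesic approximation} applied to $\hat N_{\hat p}$ from the side of $\hat\Omega_{\hat p}\ni\hat p$ produces elements of $\mathcal C$ strictly closer to $\hat p$, contradicting minimality. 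This distance-minimization argument works for \emph{every} $\hat p$ at once, with no density or iteration needed, and no appeal to connectedness of $M$. To repair your proposal, replace the iteration-plus-density step in your openness argument by this variational argument: among all leaves, choose the one of least distance to a given point, and show that distance is zero by otherwise producing a closer leaf via Proposition~\ref{Prop: totally geodesic approximation}.
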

\begin{proof}
As before, we consider the metric completion $(\hat M,\hat g)$ of $M\setminus N_{\mathrm{sys}}$ and denote $\hat N$ to be one of the boundary component. It suffices to show that for any point $\hat p$ there exists an embedded hypersurface $\hat N_p$ in $(\hat M,\hat g)$ satisfying $$\Area(\hat N_p)=|\mathbb S^n|,$$ which passes through the point $\hat p$ and encloses a region with $\hat N$.

To see this we consider the collection $\mathcal C$ of those hypersurfaces having the same volume as $\mathbb S^n$ and enclosing a region with $\hat N$. It is clear that $\mathcal C\neq \emptyset$ since $\hat N\in \mathcal C$. From Lemma \ref{Lem: infinitesimal rigidity} each hypersurface in $\mathcal C$ is totally geodesic and so $\mathcal C$ is compact with $C^\infty$-topology. In particular, we can find a hypersurface, denoted by $\hat N_{\hat p}$, which attain the least distance $\mathfrak d$ that hypersurfaces in $\mathcal C$ could have to the point $\hat p$.

We claim $\mathfrak d=0$, which is equivalent to say $\hat p\in\hat N_{\hat p}$. Otherwise, the point $\hat p$ lies in one component of $\hat M\setminus \hat N_{\hat p}$, denoted by $\hat\Omega_{\hat p}$. Notice that $\hat N_{\hat p}$ is one of the boundary components of $\hat\Omega_{\hat p}$. As in the proof of Proposition \ref{Prop: totally geodesic approximation} we are able to construct a sequence of hypersurfaces $\hat N_i$ in $\hat\Omega_{\hat p}$ satisfying $\Area(\hat N_i)=|\mathbb S^n|$ and enclosing a region with $\hat N_{\hat p}$. Furthermore, $\hat N_i$ converge to $\hat N_{\hat p}$ smoothly as $i\to\infty$. Then it is easy to verify  $\hat N_i\in \mathcal C$ and
$$\dist(\hat p,\hat N_i)< \dist(\hat p,\hat N_{\hat p})  \mbox{ for sufficiently large $i$}.$$
This contradicts to the definition of $\mathfrak d$.

For any point $p\in M$ let $N_p$ be the constructed homologically non-trivial hypersurface passing through $p$, which satisfies $\Area_g(N)=|\mathbb S^n|$ and the homologically area-minimizing property. It is easy to check that Proposition \ref{Prop: totally geodesic approximation} and Lemma \ref{Lem: curvature characterization}  hold for $N_p$ as well. In particular, we conclude that the sectional curvatures of $(M,g)$ is nonnegative at the point $p$.
\end{proof}

Now we are ready to prove Theorem \ref{Thm: main} under the additional dimension assumption $n+1\leq 7$.
\begin{proof}[Proof of Theorem \ref{Thm: main} when $n+1\leq 7$]
From Proposition \ref{Prop: systole inequality} we have
$$\sys_n(M,g)\leq |\mathbb S^n|.$$
So it remains to deal with the equality case.

From Lemma \ref{Lem: everywhere minimizing} we know that the Ricci curvature of $(M,g)$ is nonnegative everywhere. Based on this fact we can use the argument of Bray-Brendle-Neves \cite{BrayBrendleNeves2010} to prove that the universal cover $(\tilde M,\tilde g)$ splits into the Riemannian product $\mathbb S^n\times \mathbb R$. Here we include the details for completeness.

Denote $N_{\mathrm{sys}}$ to be the hypersurface constructed in Proposition \ref{Prop: systole inequality}. Recall that $N_{\mathrm{sys}}$ is an embedded $n$-sphere, so we can lift it to be an embedded $n$-sphere $\tilde N_{\mathrm{sys}}$ in $(\tilde M,\tilde g)$. From Proposition \ref{Prop: systole inequality} we know that $\tilde N_{\mathrm{sys}}$ is totally geodesic and that the normal Ricci curvature $\Ric_{\tilde M}(\tilde \nu,\tilde \nu)$ vanishes along $\tilde N_{\mathrm{sys}}$, where $\tilde \nu$ is a unit normal vector field on $\tilde N_{\mathrm{sys}}$. This implies that the Jacobi operator of $\tilde N_{\mathrm{sys}}$ coincides with the Laplace-Beltrami operator on $\tilde N_{\mathrm{sys}}$. From the same argument as \cite[Proposition 3.2]{BrayBrendleNeves2010} we can construct a foliation $\{\tilde N_\tau\}_{-\ve<\tau<\ve}$ with constant mean curvatures around $\tilde N_{\mathrm{sys}}$, where $\tilde N_0=\tilde N_{\mathrm{sys}}$. In particular, $\tilde \nu$ can be extended to a unit normal vector field $\tilde \nu_\tau$. Without loss of generality we assume that $\tilde\nu_\tau$ points to the same direction as $\partial_\tau$.

Since the Ricci curvature of $(M,g)$ is nonnegative, we have
 $$\int_{\tilde N_\tau}\Ric(\tilde \nu_\tau,\tilde \nu_\tau)+\|\tilde A_\tau\|^2\,\mathrm d\sigma\geq 0.$$
 Using the fact $\Ric(\tilde \nu_\tau,\tilde \nu_\tau)+\|\tilde A_\tau\|^2\to 0$ as $\tau\to 0$ and the Poincar\'e inequality we obtain for $\tau$ small enough that
 $$\int_{\tilde N_\tau}|\nabla\vp|^2-(\Ric(\tilde \nu_\tau,\tilde \nu_\tau)+\|\tilde A_\tau\|^2)\vp^2\,\mathrm d\sigma\geq 0 \, \mbox{ for all }\vp\in C^\infty(\tilde N_\tau).$$
 Denote $\rho_\tau=\langle \partial_\tau,\tilde \nu_\tau\rangle$ to be the positive lapse function. By taking
 $$\vp=\rho_\tau-\bar\rho_\tau\mbox{ with }\bar\rho_\tau=\Area(\tilde N_\tau)^{-1}\int_{\tilde N_\tau}\rho_\tau\,\mathrm d\sigma,$$
 we obtain
\begin{equation}\label{Eq: 1}
    \begin{split}
    \int_{\tilde N_\tau}|\nabla\rho_\tau|^2&-(\Ric(\tilde \nu_\tau,\tilde \nu_\tau)+\|\tilde A_\tau\|^2)\rho_\tau(\rho_\tau-2\bar\rho_\tau)\,\mathrm d\sigma\\
&\qquad\qquad\geq \bar\rho_\tau^2\int_{\tilde N_\tau}\Ric(\tilde \nu_\tau,\tilde \nu_\tau)+\|\tilde A_\tau\|^2\,\mathrm d\sigma\geq 0.
\end{split}
\end{equation}

On the other hand, one can compute
$$H'(\tau)=-\Delta\rho_\tau-(\Ric(\tilde \nu_\tau,\tilde \nu_\tau)+\|\tilde A_\tau\|^2)\rho_\tau.$$
Multiplying $\rho_\tau-2\bar\rho_\tau$ on both sides and integrating by parts we obtain from \eqref{Eq: 1} that
$$-H'(\tau)\int_{\tilde N_\tau}\rho_\tau\,\mathrm d\sigma\geq 0.$$
In particular, we obtain $H'(\tau)\leq 0$ and so $\tilde N_0$ has maximal volume among the hypersurfaces $\tilde N_\tau$. Since $\tilde N_\tau$ is homologous to $\tilde N_{\mathrm{sys}}$, we know $$\Area(\tilde N_\tau)\geq \sys_n(M,g)=|\mathbb S^n|.$$
This yields that $\Area(\tilde N_\tau)=|\mathbb S^n|$ and $\tilde N_\tau$ is area-minimizing in its homology class. From Lemma \ref{Lem: infinitesimal rigidity} we conclude that $\tilde N_\tau$ is isometric to the unit $n$-sphere $\mathbb S^n$, which is totally geodesic and has vanishing normal Ricci curvature. Then for all $\tilde N_\tau$ the Jacobi operator coincides with the Laplace-Beltrami operator and so $\rho_\tau$ are constant functions. From this it is easy to verify that a small tubular neighborhood of $\tilde N_{\mathrm{sys}}$ splits isometrically as $\mathbb S^n\times (-\ve,\ve)$. The global splitting comes from a use of the continuous method (see \cite[Proposition 3.8]{BrayBrendleNeves2010} for instance).
\end{proof}

 \section{Proof of Theorem \ref{Thm: main} using generic regularity hypothesis}\label{Sec: generic regularity}

 In this section, we don't make the dimension assumption $n+1\leq 7$ any more. For this reason, the support of mass-minimizing $n$-currents may have singularities. Our main task of this section is to show how to overcome this difficulty using the Generic Regularity
 Hypothesis.

 First we establish the systolic inequality:

  \begin{lma}\label{Lem: systolic inequality high dimension}
  Let $(M^{n+1},g)$ be an orientable $(n+1)$-dimensional closed Riemannian manifold with $H_n(M)\neq 0$ and $\bRic\geq n-1$. Suppose further that the Generic Regularity Hypothesis is true. Then we have
  $$\sys_n(M,g)\leq |\mathbb S^n|.$$
  \end{lma}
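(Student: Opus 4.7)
The plan is to reduce to the smooth setting handled in Section~\ref{Sec: low dim} by perturbing the metric, and then pass to a limit. Fix any non-trivial $\alpha\in H_n(M)$ and a small $\varepsilon>0$. Since the bi-Ricci curvature depends continuously on the metric, a sufficiently $C^\infty$-small perturbation $g'$ of $g$ still satisfies $\bRic_{g'}\geq n-1-\varepsilon$; by the Generic Regularity Hypothesis we may additionally arrange that $\alpha$ admits a least-$g'$-area representative $T=\sum_{i=1}^Q k_i M_i$ with disjoint, smooth, closed, oriented hypersurfaces $M_i$ and non-zero integer weights $k_i$.

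I would then show that each connected component $C$ of each $M_i$ is a smooth, closed, two-sided, stable minimal hypersurface in $(M,g')$. Two-sidedness is automatic from the orientability of $M$ and of $C$. For stability, a smooth normal variation $C_s$ of $C$ alone, leaving the other components fixed, produces a current homologous to $T$ whose mass differs from $|T|(M)$ only by $|k_i|(\Area_{g'}(C_s)-\Area_{g'}(C))$, because by the disjointness clause in the Generic Regularity Hypothesis and compactness, $C_s$ remains disjoint from every other component for small $s$. Mass-minimality of $T$ then forces $\Area_{g'}(C_s)\geq\Area_{g'}(C)$, so $C$ is minimal and stable. Since $\sum_i k_i[M_i]=\alpha\neq 0$, at least one component $C_\ast$ satisfies $[C_\ast]\neq 0$ in $H_n(M)$.

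A constant rescaling reduces the bi-Ricci hypothesis to that of Lemma~\ref{Lem: infinitesimal rigidity}: under $\tilde g=\lambda^2 g'$ the bi-Ricci on $\tilde g$-orthonormal pairs scales as $\lambda^{-2}$, so the choice $\lambda^2=\tfrac{n-1-\varepsilon}{n-1}$ yields $\bRic_{\tilde g}\geq n-1$, and Lemma~\ref{Lem: infinitesimal rigidity} applied to the stable two-sided minimal hypersurface $C_\ast$ in $(M,\tilde g)$ gives $\Area_{\tilde g}(C_\ast)\leq|\mathbb S^n|$, i.e.\
\[
\Area_{g'}(C_\ast)\leq \Big(\tfrac{n-1}{n-1-\varepsilon}\Big)^{n/2}|\mathbb S^n|.
\]
Since $C_\ast$ is a smooth, homologically non-trivial hypersurface in $M$, the definition of the systole together with continuity of area in the metric gives
\[
\sys_n(M,g)\leq \Area_g(C_\ast)= \big(1+o(1)\big)\Area_{g'}(C_\ast),
\]
and letting $\varepsilon\to 0$ (with $g'\to g$ correspondingly) yields $\sys_n(M,g)\leq|\mathbb S^n|$.

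The main obstacle lies in the second step: extracting a single smooth stable minimal hypersurface out of the weighted, potentially multi-component, multiplicity-bearing minimizer produced by the Generic Regularity Hypothesis. The pairwise disjointness in the hypothesis is exactly what lets the variational argument on a single component decouple from the rest; without it, deforming one piece could generate spurious intersections and destroy the mass comparison needed for stability.
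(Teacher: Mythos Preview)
Your proof is correct and follows essentially the same approach as the paper: perturb the metric via the Generic Regularity Hypothesis to obtain a smooth least-area representative, extract a homologically non-trivial component which is a two-sided stable minimal hypersurface, apply Lemma~\ref{Lem: infinitesimal rigidity} after a constant rescaling to normalize the bi-Ricci lower bound, and let $\varepsilon\to 0$. The paper's version is terser---it simply asserts that a component $N_{\alpha,0}$ of the minimizer is ``still $g_\ve$-area-minimizing and homologically non-trivial''---whereas you spell out the stability argument explicitly via the disjointness clause in the hypothesis; both justifications are equivalent and lead to the same conclusion.
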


  \begin{proof}
      It suffices to prove
      \begin{equation}\label{Eq: perturb systole}
      \sys_n(M,g)\leq (1+\ve)^{2n}\cdot|\mathbb S^n|
      \end{equation} for any positive constant $\ve$. From the Generic Regularity Hypothesis for any constant $\ve>0$ we can find a smooth metric $g_\ve$ such that
      \begin{equation}\label{Eq: generic perturbation}
          \bRic_{g_\ve}\geq \frac{n-1}{(1+\ve)^2}\mbox{ and }g_\ve\geq (1+\ve)^{-2}g,
      \end{equation}
      and that for any non-trivial homology class $\alpha\in H_n(M)$ there exists a smooth, embedded, $g_\ve$-area-minimizing hypersurface $N_\alpha$ with integer multiplicity.

Fix a non-trivial homology class $\alpha\in H_n(M)$ and take the area-minimizing hypersurface $N_\alpha$ with respect to $g_\ve$. Notice that we can pick up a component of $N_\alpha$, denoted by $N_{\alpha,0}$, which is still $g_\ve$-area-minimizing and homologically non-trivial. It follows from Lemma \ref{Lem: infinitesimal rigidity} and \eqref{Eq: generic perturbation} that
$$ \Area_g(N_{\alpha,0})\leq (1+\ve)^n\cdot\Area_{g_\ve}(N_{\alpha,0})\leq (1+\ve)^{2n}\cdot|\mathbb S^n|.$$

In particular, we obtain \eqref{Eq: perturb systole}. The proof is completed by letting $\ve \to 0$.
\end{proof}

More delicate analysis needs to be done when we deal with the rigidity.
  \begin{prop}\label{Prop: existence of systole hypersurface}
      Let $(M^{n+1},g)$ be an orientable $(n+1)$-dimensional closed Riemannian manifold with $H_n(M)\neq 0$ and $\bRic\geq n-1$. Suppose that the Generic Regularity Hypothesis is true. If $\sys_n(M,g)=|\mathbb S^n|$, then  we can find a connected, smooth, embedded, homologically non-trivial hypersurface $N_{\mathrm{sys}}$ with $$\Area(N_{\mathrm{sys}})=|\mathbb S^n|.$$
  \end{prop}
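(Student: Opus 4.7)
I will realize $N_{\mathrm{sys}}$ as a subsequential varifold limit of the smooth area minimizers supplied by the Generic Regularity Hypothesis, and then exclude higher multiplicity and singularities in the limit by combining Lemma \ref{Lem: infinitesimal rigidity} with the equality assumption $\sys_n(M,g)=|\mathbb S^n|$. The main obstacle will be handling the possible singular set of Hausdorff dimension at most $n-7$ of the limiting stable minimal hypersurface, which must be shown to be empty.

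\textbf{Step 1: constructing approximating minimizers.} Exactly as in the proof of Lemma \ref{Lem: systolic inequality high dimension}, for each $\varepsilon_i\to 0^+$ I pick a smooth metric $g_i$ with $g_i\to g$ in $C^\infty$, $\bRic_{g_i}\ge(n-1)(1+\varepsilon_i)^{-2}$, $(1+\varepsilon_i)^{-2}g\le g_i\le(1+\varepsilon_i)^2g$, together with a connected, smooth, embedded, homologically non-trivial hypersurface $N_i$ which is $g_i$-area-minimizing in its own class. Lemma \ref{Lem: infinitesimal rigidity} applied in $(M,g_i)$ gives $\Area_{g_i}(N_i)\le(1+o(1))|\mathbb S^n|$, and then the metric comparison yields $\Area_g(N_i)\le(1+o(1))|\mathbb S^n|$; on the other hand $\sys_n(M,g)=|\mathbb S^n|$ forces $\Area_g(N_i)\ge|\mathbb S^n|$, so $\Area_g(N_i)\to|\mathbb S^n|$.

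\textbf{Step 2: extracting a varifold limit.} Stability of $N_i$ in $(M,g_i)$ combined with the Schoen--Simon compactness theory for codimension-one stable minimal hypersurfaces allows a subsequence to converge as integer-multiplicity stationary varifolds to $V=\sum_j m_j\Sigma_j$ in $(M,g)$ with $\mathrm{mass}(V)=|\mathbb S^n|$, where each $m_j\in\mathbb Z_{>0}$ and each $\Sigma_j$ is a connected stable minimal hypersurface smooth away from a closed singular set of Hausdorff dimension at most $n-7$. Since integer flat cycles form a discrete subgroup under flat convergence, after a further subsequence $[N_i]$ stabilizes at some non-trivial $\alpha_0\in H_n(M;\mathbb Z)$ and $\sum_j m_j[\Sigma_j]=\alpha_0$, so some $\Sigma_{j_0}$ has $[\Sigma_{j_0}]\ne 0$.

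\textbf{Step 3: ruling out multiplicity and singularities.} This is the heart of the proof. A capacity cutoff near the singular set---available since its Hausdorff codimension in $\Sigma_{j_0}$ is at least $7$, giving vanishing $W^{1,2}$-capacity---allows the stability argument in Lemma \ref{Lem: infinitesimal rigidity}, combined with Theorem \ref{Thm: spectral comparison}, to extend to the singular setting and yield $\mathcal H^n(\Sigma_{j_0})\le|\mathbb S^n|$. If this inequality were strict, a further application of the Generic Regularity Hypothesis around $g$ would produce a smooth homologically non-trivial representative of $[\Sigma_{j_0}]$ of area strictly less than $|\mathbb S^n|$, contradicting $\sys_n(M,g)=|\mathbb S^n|$. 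Hence $\mathcal H^n(\Sigma_{j_0})=|\mathbb S^n|$, and the identity $|\mathbb S^n|=\mathrm{mass}(V)=\sum_j m_j\mathcal H^n(\Sigma_j)$ together with $\mathcal H^n(\Sigma_j)\le|\mathbb S^n|$ forces $V=\Sigma_{j_0}$ with $m_{j_0}=1$. The equality case in the singular form of Lemma \ref{Lem: infinitesimal rigidity} (based on the rigidity in Theorem \ref{Thm: spectral comparison}) finally shows $\Sigma_{j_0}$ is smooth and isometric to $\mathbb S^n$, so setting $N_{\mathrm{sys}}:=\Sigma_{j_0}$ yields the desired hypersurface.
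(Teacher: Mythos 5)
Your Steps 1 and 2 match the paper's setup, but Step 3 — the heart of the argument, as you yourself flag — contains a genuine gap that your capacity cutoff does not close. You propose to run the stability argument of Lemma~\ref{Lem: infinitesimal rigidity} directly on the singular limit hypersurface $\Sigma_{j_0}$ by cutting off near its singular set, invoking Theorem~\ref{Thm: spectral comparison} in a ``singular form'' to get both the area bound and then the rigidity (isometry to $\mathbb S^n$, hence smoothness). But Theorem~\ref{Thm: spectral comparison} is a statement about smooth closed Riemannian $n$-manifolds: its proof is a delicate comparison theorem (involving an auxiliary conformal metric and integral estimates for the modified distance function), not a mere stability inequality that a $W^{1,2}$-capacity cutoff would extend. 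In particular the rigidity conclusion ``$N$ is isometric to $\mathbb S^n$'' presupposes that $N$ is a smooth manifold, so invoking this rigidity to \emph{deduce} that $\Sigma_{j_0}$ is smooth is circular. Nothing in your proposal actually rules out a limit that is singular, totally geodesic on its regular part, and has the right mass.

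The paper avoids this circularity by never trying to run the spectral comparison on the singular limit. Instead it works entirely on the smooth approximants $N_i$: from $\sys_n(M,g)=|\mathbb S^n|$ and the quantitative form of Theorem~\ref{Thm: spectral comparison} it extracts that the first eigenvalue $\lambda_i$ of a modified Jacobi operator on $N_i$ is $o(1)$, and combines this with the stability inequality to get
\[
\int_{N_i}\|A_i\|_{g_i}^2 u_i^2\,\mathrm d\sigma_{g_i}=o(1)\int_{N_i} u_i^2\,\mathrm d\sigma_{g_i}.
\]
A Michael--Simon--Sobolev and $L^p$-interpolation argument controls $\|u_i\|_{L^2}$ by $\|u_i\|_{L^{2/(n-1)}}$, which is bounded via Remark~\ref{Rem: integral estimate}, yielding $\int_{N_i}\|A_i\|_{g_i}^2\,\mathrm d\sigma_{g_i}=o(1)$. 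Passing this $L^2$-bound to the varifold limit shows the regular part $\mathcal R$ is totally geodesic. Singularities are then excluded by a standard dimension-descent blow-up: a nontrivial singular set would produce a non-planar area-minimizing cone whose regular part is totally geodesic, hence planar, a contradiction. This is structurally different from, and more robust than, your proposed extension of the spectral theorem to singular spaces; if you want to salvage your approach you would need to actually prove a singular version of Theorem~\ref{Thm: spectral comparison} including its rigidity case, which is a substantial missing ingredient.
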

  \begin{proof}
      From the proof of Lemma \ref{Lem: systolic inequality high dimension} we can find a sequence of smooth metrics $g_i$ and connected smooth embedded hypersurfaces $N_i$ with $[N_i]\neq 0\in H_n(M)$ such that
      \begin{itemize}\setlength{\itemsep}{1mm}
          \item $g_i$ converge to $g$ smoothly;
          \item $N_i$ is $g_i$-area-minimizing in its homology class;
          \item $\Area_{g_i}(N_i)=|\mathbb S^n|+o(1)$ as $i\to \infty$.
      \end{itemize}

      Fix an orientation on each hypersurface $N_i$. Since $\Area_{g_i}(N_i)$ are uniformly bounded, it follows from geometric measure theory that $N_i$ converge to a $g$-mass-minimizing $n$-current $N$ with integer multiplicity in the sense of current up to a subsequence. Moreover, the mass $\mathbb M_g(N)$ of $N$ equals to $|\mathbb S^n|$.

      Since $N$ is a $g$-mass-minimizing $n$-current, its support $\underline N$ can be decomposed as $\underline N=\mathcal R\sqcup\mathcal S$, where $\mathcal R$ is a smooth embedded open hypersurface and $\mathcal S$ is a closed subset satisfying $\mathcal H^{n-7+\ve}_g(\mathcal S)=0$ for any $\ve>0$.

      We are going to show that $\mathcal R$ is totally geodesic.
      This is based on $\mathcal L^2$-integral estimate for the second fundamental form of $N_i$. Denote $\nu_i$ and $A_i$ to be a unit normal vector field and  the corresponding second fundamental form of $N_i$ in $(M,g_i)$. Repeating the computation in the proof of Lemma \ref{Lem: infinitesimal rigidity} (see \eqref{Eq: spectral function-1}) we can derive
      \begin{equation}\label{Eq: Ricci lower bound}
          \Ric_{g_i}(\nu_i,\nu_i)\geq n-1+o(1)-\lambda_{\Ric,N_i}-\|A_i(e_i,\cdot)\|_{g_i}^2,
      \end{equation}
      where
      $$ \lambda_{\Ric,N_i}(x) = \min_{v\in T_{x}N_i, \, |v|_{g_i}=1}\Ric^{N_i}_{g_i}(v,v)$$
      and $e_i$ is a $g_i$-unit tangential vector of $N_i$ such that $$\Ric^{N_i}_{g_i}(e_i,e_i)=\lambda_{\Ric,N_i}.$$ Since $N_i$ is minimal, it is easy to derive
      \begin{equation}\label{Eq: curvature upper bound}
          \|A_i(e_i,\cdot)\|_{g_i}^2\leq \frac{n-1}{n}\|A_i\|_{g_i}^2.
      \end{equation}
    Denote $\lambda_i$ and $u_i$ to be the first eigenvalue and the first eigenfunction of the operator
    $$\mathcal J_i=-\Delta_{N_i,g_i}-\Big(\Ric_{g_i}(\nu_i,\nu_i)+\frac{n-1}{n}\|A_i\|_{g_i}^2\Big).$$
    Then we can compute from \eqref{Eq: Ricci lower bound} and \eqref{Eq: curvature upper bound} that
    $$-\Delta_{N_i,g_i}u_i+\lambda_{\Ric,N_i}u_i\geq \big(n-1+o(1)+\lambda_i\big)u_i.$$
    It follows from Theorem \ref{Thm: spectral comparison} and the assumption $\sys_n(M,g)=|\mathbb S^n|$ that
    $$|\mathbb S^n|\leq (1+o(1))\cdot \left(\frac{n-1+o(1)+\lambda_i}{n-1}\right)^{-\frac{n}{2}}|\mathbb S^n|.$$
This implies $\lambda_i=o(1)$ as $i\to\infty$. From the stability of $N_i$ we have
\[
\begin{split}
\int_{N_i}(\Ric_{g_i}(\nu_i,\nu_i)+ {} & \|A_i\|_{g_i}^2)u_i^2\,\mathrm d\sigma_{g_i}
\leq  \int_{N_i}|\nabla_{N_i,g_i}u_i|_{g_i}^2\,\mathrm d\sigma_{g_i} \\
= {} & \int_{N_i}\Big(\Ric_{g_i}(\nu_i,\nu_i)+\frac{n-1}{n}\|A_i\|_{g_i}^2-\lambda_i\Big)u_i^2\,\mathrm d\sigma_{g_i},
\end{split}
\]
which implies
$$\int_{N_i}\|A_i\|_{g_i}^2u_i^2\,\mathrm d\sigma_{g_i}\leq n\lambda_i\int_{N_i}u_i^2\,\mathrm d\sigma_{g_i}=o(1)\int_{N_i}u_i^2\,\mathrm d\sigma_{g_i}$$
and so
\[
\int_{N_i}|\nabla_{N_i,g_i}u_i|_{g_i}^2\,\mathrm d\sigma_{g_i}
= O(1)\int_{N_i}u_i^2\,\mathrm d\sigma_{g_i}.
\]

    In the following discussion, we make the normalization $$\min_{N_i}u_i=1.$$ Recall from Remark \ref{Rem: integral estimate} (with $\gamma=1$) that we actually have
    $$\int_{N_i}u_i^{\frac{2}{n-1}}\,\mathrm d\sigma_{g_i}\leq |\mathbb S^n|.$$
    Denote $\bar A_i$ to be the second fundamental form of $N_i$ in $(M,g)$. Then one can compute
    $$\|\bar A_i\|_{g}\leq \big(1+o(1)\big)\|A_i\|_{g_i}+o(1)\mbox{ as }i\to\infty.$$
    Using the Michael-Simon-Sobolev inequality on $(M,g)$ we obtain
    \begin{equation}\label{Eq: Lp control}
\begin{split}
   \left( \int_{N_i}u_i^{\frac{2n}{n-2}}\,\mathrm d\sigma_{g_i}\right)^{\frac{n-2}{n}}&=O(1)\left(\int_{N_i}u_i^{\frac{2n}{n-2}}\,\mathrm d\sigma_{g}\right)^{\frac{n-2}{n}}\\
    &\leq O(1)\left(\int_{N_i}|\nabla_{N_i,g}u_i|_g^2+\|\bar A_i\|^2_gu_i^2\,\mathrm d\sigma_{g}\right)\\
    &=O(1)\left(\int_{N_i}|\nabla_{N_i,g_i}u_i|_{g_i}^2+\|A_i\|^2_{g_i}u_i^2+u_i^2\,\mathrm d\sigma_{g_i}\right)\\
    &=O(1)\int_{N_i}u_i^2\,\mathrm d\sigma_{g_i}.
\end{split}
    \end{equation}
    It is well-known that we have the following interpolation inequality for $L^p$-spaces ($p>0$)
    \begin{equation}\label{Eq: Lp interpolation}
        \|u_i\|_{L^2(N_i,g_i)}\leq \|u_i\|^\theta_{L^{\frac{2}{n-1}}(N_i,g_i)}\cdot\|u_i\|^{1-\theta}_{L^{\frac{2n}{n-2}}(N_i,g_i)},
    \end{equation}
    where $\theta=\theta(n)\in (0,1)$ is a constant determined by
    $$\frac{1}{2}=\theta\cdot \frac{2}{n-1}+(1-\theta)\cdot \frac{2n}{n-2}.$$
    Therefore we can derive from \eqref{Eq: Lp control} and \eqref{Eq: Lp interpolation} that
    $$\|u_i\|_{L^2(N_i,g_i)}\leq O(1)\cdot\|u_i\|_{L^{\frac{2}{n-1}}(N_i,g_i)}=O(1)\mbox{ as }i\to \infty.$$
    This implies
    \begin{equation}\label{Eq: curvature estimate}
    \int_{N_i}\|A_i\|_{g_i}^2\,\mathrm d\sigma_{g_i}\leq \int_{N_i}\|A_i\|_{g_i}^2u_i^2\,\mathrm d\sigma_{g_i}=o(1)\int_{N_i}u_i^2\,\mathrm d\sigma_{g_i}=o(1)\mbox{ as }i\to\infty.
    \end{equation}

    Now we consider the convergence of $N_i$ to the regular part $\mathcal R$ of $N$. Since $N_i$ are area-minimizing, it follows from \cite[Theorem 34.5]{Simon1983} that $N_i$ converge to $N$ also in the sense of varifold. Denote $\underline {N_i}$ to be the support of $N_i$. From the monotonicity formula we can derive
    \begin{equation}\label{Eq: Hausdorff distance}
    d_{\mathcal H}(\underline {N_i},\underline N)\to 0\mbox{ as }i\to\infty.\end{equation}
    Fix a point $p\in \mathcal R$ and take a small $g$-geodesic ball $B$ centered at $p$ such that $\mathcal R\cap B$ is a connected smooth hypersurface. It follows from \cite[Corollary 27.8]{Simon1983} that $N_i|_B$ can be decomposed into a sum of $g_i$-area-minimizing boundaries. From \eqref{Eq: Hausdorff distance} we can find $g_i$-area-minimizing boundaries $\mathcal T_i$ (from the decomposition of $N_i|_B$) satisfying $\dist(p,\underline{\mathcal T_i})\to 0$ as $i\to\infty$. Using \eqref{Eq: Hausdorff distance} as well as the constancy theorem \cite[Theorem 26.27]{Simon1983} we conclude that $\mathcal T_i$ converge to $(\mathcal R\cap B,1,\xi)$ in the sense of current and varifold, where $\xi$ is an orientation on $\mathcal R\cap B$. Then it follows from the Allard regularity theorem and the standard theory of elliptic PDEs that $\underline{\mathcal T_i}$ converge to $\mathcal R\cap B$ locally in $C^2$-sense. After passing to a smaller $g$-geodesic ball $B'$ centered at $p$ it follows from \eqref{Eq: curvature estimate} that
    $$\int_{\mathcal R\cap B'}\|A\|_g^2\,\mathrm d\sigma_g=\lim_{i\to\infty}\int_{\underline{\mathcal T_i}\cap B'}\|A_i\|_{g_i}^2\,\mathrm d\sigma_{g_i}=0.$$
    In particular, we obtain $A(p)=0$.
    Let $p$ run over $\mathcal R$, and we conclude that $\mathcal R$ is totally geodesic.

    We claim $\mathcal S=\emptyset$ and so $\underline N$ is smooth. Otherwise, it follows from the standard dimension-descent argument (see \cite[Appendix A]{Simon1983} for instance) that there is a non-planar area-minimizing cone $\mathcal C^d$ in $\mathcal R^{d+1}$ with isolated singularity such that $\mathcal C^d\times \mathbb R^{n-d}$ comes from the blow-up of $\underline N$. However, the same analysis as above yields that the regular part of $\mathcal C^d$ is totally geodesic and so $\mathcal C^d$ is a hyperplane, which leads to a contradiction.

    Since we have $\mathbb M_g(N)=|\mathbb S^n|$, we conclude that $\underline N$ is connected and that $N_i$ has multiplicity one. Take $N_{\mathrm{sys}}=\underline N$. Then it is easy to verify that $N_{\mathrm{sys}}$ satisfies all our requirements.
  \end{proof}

 \begin{lma}\label{Lem: smooth neighborhood}
      Proposition \ref{Prop: totally geodesic approximation} is true.
  \end{lma}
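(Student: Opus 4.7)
My plan is to transfer the metric-deformation argument of Proposition \ref{Prop: totally geodesic approximation} to higher dimensions by combining it with the Generic Regularity Hypothesis and the $L^2$-curvature estimate from the proof of Proposition \ref{Prop: existence of systole hypersurface}. The preparatory setup carries over verbatim: cut $M$ along $N_{\mathrm{sys}}$ to obtain $(\hat M,\hat g)$ with boundary component $\hat N$, pick a minimizing geodesic $\hat\gamma$ perpendicular to $\hat N$ at $\hat\gamma(0)$, set $\hat p=\hat\gamma(s)$ for small $s$, fix constants $\theta_0,\iota_0\in(0,1)$ compatible with Lemmas \ref{Lem: biRicci increase}--\ref{Lem: Ricci increase}, and form the conformal deformation $\hat g_{r,t}=e^{-2t(r^2-\hat\rho^2)^5}\hat g$ with $r=s+\tau$. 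In this metric $\hat N$ is strictly mean-convex and the positive improvement $+50tr^2(r^2-\hat\rho^2)^3$ is available in the annulus $\hat{\mathcal A}_{\theta_0,r}(\hat p)$.

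For each $t>0$, I invoke the Generic Regularity Hypothesis to produce a $C^\infty$-small perturbation $\tilde g^{(t)}$ of $\hat g_{r,t}$ admitting a smooth, embedded, area-minimizing representative $\hat N_{\min,t}$ of the homology class associated with the class $\mathcal C$ of hypersurfaces enclosing a region with $\hat N$. Provided the perturbation is taken sufficiently small relative to $t$, the mean convexity of $\hat N$ and the improvement constants in Lemmas \ref{Lem: biRicci increase}--\ref{Lem: Ricci increase} survive up to $o(1)$ errors, and the $\tilde g^{(t)}$-area of $\hat N_{\min,t}$ is strictly less than $|\mathbb S^n|$.

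Sending $t\to 0^+$, the homological area-minimality of $N_{\mathrm{sys}}$ in $(M,g)$ yields
\[
|\mathbb S^n|=\sys_n(M,g)\leq \Area_{\hat g}(\hat N_{\min,t})\leq (1+o(1))\Area_{\tilde g^{(t)}}(\hat N_{\min,t})\leq |\mathbb S^n|+o(1),
\]
putting us precisely in the regime of the $L^2$-curvature argument of Proposition \ref{Prop: existence of systole hypersurface}. That argument delivers a uniform $\|A\|_{L^2}$-bound on $\hat N_{\min,t}$, upgrades to smooth graphical convergence on the regular part via Michael--Simon--Sobolev and $L^p$-interpolation, and eliminates singularities by the dimension-descent argument. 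The limit is a smooth, totally geodesic, stable minimal hypersurface $\hat N_{\min}$ in $(\hat M,\hat g)$ of area $|\mathbb S^n|$.

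Finally, I replay the contradiction step of Proposition \ref{Prop: totally geodesic approximation}, now legitimate since each $\hat N_{\min,t}$ is smooth: were $\hat N_{\min}=\hat N$, the strict positive term in the deformed stability inequality together with Theorem \ref{Thm: spectral comparison} and the lower bound $\Area_{\hat g}(\hat N_{\min,t})\geq |\mathbb S^n|$ would force $\hat N_{\min,t}\cap \hat B_r(\hat p)=\emptyset$ for small $t$, which is impossible since $\hat N$ intersects $\hat B_r(\hat p)$. Letting $s\to 0^+$ then produces the desired one-sided sequence of totally geodesic hypersurfaces. The main obstacle is the third step: coordinating the simultaneous vanishing of the Generic-Regularity perturbation and of $t$ so that the $\|A\|_{L^2}$-estimate passes to the limit, and ensuring that the strict improvement $+50tr^2(r^2-\hat\rho^2)^3$ dominates the perturbation errors so that the final contradiction survives.
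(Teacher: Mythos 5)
Your approach and the paper's diverge sharply at the key point: how to get smoothness of the minimizers $\hat N_{\mathrm{min},t}$ and $\hat N_{\mathrm{min}}$ when $n+1\geq 8$.

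You propose to re-invoke the Generic Regularity Hypothesis, this time perturbing each deformed metric $\hat g_{r,t}$ on $\hat M$. The first problem is that the GRH, as stated in the paper, concerns \emph{closed} oriented manifolds and minimization within a fixed homology class $\alpha\in H_n(M)$. The object $\hat M$ obtained by cutting $M$ along $N_{\mathrm{sys}}$ is a compact manifold with boundary, and the minimization over the class $\mathcal C$ of hypersurfaces enclosing a region with $\hat N$ is a Plateau/boundary-value type problem (free boundary on the region enclosed, with $\hat N$ acting as a barrier), not a closed homology class minimization. Nothing in the stated hypothesis covers this situation, so you cannot simply produce the smooth perturbed minimizers $\hat N_{\min,t}$ as claimed. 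The second problem, which you flag yourself as ``the main obstacle,'' is that you would need the GRH perturbation $\tilde g^{(t)}\to\hat g_{r,t}$ to vanish fast enough relative to $t$ so that (i) the $L^2$-curvature estimate passes to the limit and (ii) the strict improvement $50tr^2(r^2-\hat\rho^2)^3$ in the stability inequality is not swamped; since this improvement degenerates near $\hat\rho=r$, the required coordination is genuinely delicate and you do not carry it out. As written, the argument has a gap at exactly the step you identify.

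The paper bypasses all of this with a smoothness-propagation argument that needs no further use of the GRH. It observes that, by the distance bound $\dist_{\hat g}(\underline{\hat N_{\mathrm{min}}^{s}},\hat N)\leq 4s$ and the mass identity $\mathbb M_{\hat g}(\hat N_{\mathrm{min}}^{s})=|\mathbb S^n|=\Area_{\hat g}(\hat N)$, any limit of the $\hat g$-mass-minimizers $\hat N_{\mathrm{min}}^{s_i}$ as $s_i\to 0$ is forced to be $(\hat N,1,\hat\xi)$, i.e.\ smooth with multiplicity one. Simon's Theorem~36.3 (allowing Theorem~36.5 and the strong maximum principle for the limit identification) then says that integral currents minimizing mass and converging to a smooth multiplicity-one minimizer are themselves smooth; this gives $s_*>0$ such that $\hat N_{\mathrm{min}}^{s}$ is smooth for $s<s_*$. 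Applying the same theorem once more for fixed $s<s_*$, with $\hat g_{r,t}\to\hat g$ as $t\to 0^+$, produces $t_*(s)>0$ with $\hat N_{\mathrm{min},t}$ smooth for $t<t_*(s)$, and Allard regularity plus elliptic theory upgrades this to $C^{5,\alpha}$-graphical convergence. From there the contradiction step of Proposition~\ref{Prop: totally geodesic approximation} runs unchanged. This is both shorter and avoids the applicability issues of your route: you should adopt the smoothness-propagation argument rather than attempting a second invocation of the GRH.
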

 \begin{proof}
      We take the same construction and the same notations as in the proof of Proposition \ref{Prop: totally geodesic approximation}. To make the original argument work we need to guarantee the smoothness of $\hat N_{\mathrm{min},t}$ and $\hat N_{\mathrm{min}}$.

     Recall from Section \ref{Proposition 2.6} that in the construction of $\hat N_{\mathrm{min}}$ we fix a constant $s\in (0,\ve)$.  We claim that there exists a constant $s_*>0$  such that $\hat N_{\mathrm{min}}$ is a smooth hypersurface whenever $s<s_*$. To emphasize the dependence of $\hat N_{\mathrm{min}}$ on $s$ we shall denote it by $\hat N_{\mathrm{min}}^s$. Suppose that our claim is false. Then we can find constants $s_i$ converging to $0$ as $i\to\infty$ such that $\hat N_{\mathrm{min}}^{s_i}$ are $\hat g$-mass-minimizing boundaries with non-empty singularity. Recall from \eqref{Eq: distance estimate} that we have
       $$\dist_{\hat g}(\underline{\hat N_{\mathrm{min}}^{s_i}},\hat N)\leq 4s_i\to 0\mbox{ as }s_i\to 0.$$
       Denote $\hat N_{\mathrm{min}}^{+}$ to be the limit of $\hat N_{\mathrm{min}}^{s_i}$ in the sense of current up to a subsequence. It follows from \cite[Theorem 36.5]{Simon1983} and the strong maximum principle that $\hat N$ is a component of the support $\underline{\hat N_{\mathrm{min}}^{+}}$. Recall that we have
       $$\mathbb M_{\hat g}(\hat N_{\mathrm{min}}^+)=|\mathbb S^n|=\Area_{\hat g}(\hat N).$$
       This yields that $\hat N_{\mathrm{min}}^+=(\hat N,1,\hat \xi)$ with a suitable choice of the orientation $\hat \xi$ on $\hat N$. Since $\hat N_{\mathrm{min}}^+$ is smooth with multiplicity one, from \cite[Theorem 36.3]{Simon1983} we know that $\hat N_{\mathrm{min}}^{s_i}$ is also smooth for sufficiently large $i$. This contradicts to the assumption that $\hat N_{\mathrm{min}}^{s_i}$ has non-empty singularity.

       Since $\hat N_{\mathrm{min}}^s$ is smooth with multiplicity one for each $s<s_*$ as a $\hat g$-area-minimizing boundary, it follows from \cite[Theorem 36.3]{Simon1983} again that there is a constant $t_*=t_*(s)$ such that the $\hat g_{r,t}$-mass-minimizing current $\hat N_{\mathrm{min},t}$ is smooth as well. Moreover, $\hat N_{\mathrm{min},t}$ converge to $\hat N_{\mathrm{min}}^s$ in $C^{5,\alpha}$-graphical sense due to the Allard regularity theorem and the standard theory of elliptic PDEs.
       Now the original argument in the proof of Proposition \ref{Prop: totally geodesic approximation} can work smoothly and this completes the proof.
  \end{proof}

  \begin{proof}[Proof of Theorem \ref{Thm: main}]
This follows directly from Lemma \ref{Lem: systolic inequality high dimension}, Proposition \ref{Prop: existence of systole hypersurface} and Lemma \ref{Lem: smooth neighborhood} as well as the proof of Theorem \ref{Thm: main} when $n+1\leq 7$.
  \end{proof}

\appendix

\section{Curvature change under conformal deformation}\label{Appendix}

\begin{lma}\label{Conformal lemma}
Let $(M,g)$ be an $m$-dimensional Riemannian manifold and
$$\tilde g=e^{-2f}g \mbox{ with } f\in C^\infty(M).$$
Suppose that $(v,w)$ is an $g$-orthonormal pair of vectors and that $(\tilde v,\tilde w)$ is the $\tilde g$-unit normalization of $(v,w)$. Then we have
\begin{equation}\label{Eq: Ricci change}
\begin{split}
\Ric_{\ti{g}}(\ti{v},\ti{v})
=e^{2f}\Big( \Ric_{g}(v,v)&+\Delta_{g}f+(m-2)\nabla_{g}^{2}f(v,v)\\
&\qquad-(m-2)|\nabla_{g}f|_g^{2}+(m-2)v(f)^{2} \Big)
\end{split}
\end{equation}
and
\begin{equation}\label{Eq: biRicci change}
\begin{split}
\bRic_{\ti{g}}&(\ti{v},\ti{w})
=  e^{2f}\Big( \bRic_{g}(v,w)+2\Delta_{g}f+(m-3)\big(\nabla_{g}^{2}f(v,v)\\
&+\nabla_{g}^{2}f(w,w)\big)  -(2m-5)|\nabla_{g}f|_g^{2}+(m-3)\big(v(f)^{2}+w(f)^{2}\big)\Big).
\end{split}
\end{equation}
\end{lma}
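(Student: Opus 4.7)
The plan is to expand both sides of \eqref{Eq: Ricci change} and \eqref{Eq: biRicci change} using the standard conformal change formulas for the Ricci tensor and the full $(0,4)$-Riemann tensor, then collect terms. The proof reduces to careful sign-bookkeeping.

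For \eqref{Eq: Ricci change}, I would invoke (or derive in the usual way via $\tilde\Gamma - \Gamma$) the classical identity
\[
\Ric_{\tilde g} = \Ric_g + (m-2)\nabla_g^2 f + (m-2)\,df\otimes df + \big(\Delta_g f - (m-2)|\nabla_g f|_g^2\big)\,g
\]
valid under $\tilde g = e^{-2f}g$. Evaluating at a $g$-orthonormal $v$ and noting $\tilde v = e^f v$, so that $\Ric_{\tilde g}(\tilde v,\tilde v) = e^{2f}\Ric_{\tilde g}(v,v)$, gives \eqref{Eq: Ricci change} directly.

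For \eqref{Eq: biRicci change}, I would use the conformal change of the full Riemann tensor. Introducing the auxiliary symmetric $2$-tensor
\[
S = \nabla_g^2 f + df\otimes df - \tfrac{1}{2}|\nabla_g f|_g^2\, g,
\]
the standard identity reads
\[
\Rm_{\tilde g}(X,Y,Z,W) = e^{-2f}\Big(\Rm_g(X,Y,Z,W) - g(X,Z)S(Y,W) - g(Y,W)S(X,Z) + g(X,W)S(Y,Z) + g(Y,Z)S(X,W)\Big).
\]
Plugging in a $g$-orthonormal pair $(v,w)$ kills the off-diagonal terms since $g(v,w)=0$, leaving $+S(v,v)+S(w,w)$; after the $e^{4f}$ factor from normalizing four slots to $\tilde g$-unit length, one arrives at
\[
\Rm_{\tilde g}(\tilde v,\tilde w,\tilde w,\tilde v) = e^{2f}\Big(\Rm_g(v,w,w,v) + \nabla_g^2 f(v,v) + \nabla_g^2 f(w,w) + v(f)^2 + w(f)^2 - |\nabla_g f|_g^2\Big).
\]
Substituting this and \eqref{Eq: Ricci change} into $\bRic_{\tilde g}(\tilde v,\tilde w) = \Ric_{\tilde g}(\tilde v,\tilde v) + \Ric_{\tilde g}(\tilde w,\tilde w) - \Rm_{\tilde g}(\tilde v,\tilde w,\tilde w,\tilde v)$ and collecting, the Hessian coefficients tally as $(m-2)+(m-2)-1-1 = 2(m-3)$ (split symmetrically between the $(v,v)$ and $(w,w)$ slots), the $v(f)^2, w(f)^2$ coefficients tally identically, the $|\nabla_g f|_g^2$ coefficient becomes $-2(m-2)+1 = -(2m-5)$, and the two Laplacian contributions combine to $2\Delta_g f$. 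This yields \eqref{Eq: biRicci change}.

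The main obstacle is sign-bookkeeping. Both conformal formulas involve background symmetric $2$-tensors combining Hessian, $df\otimes df$, and $|\nabla f|^2 g$ pieces, and the precise signs depend on conventions for the Kulkarni--Nomizu product, for the conformal exponent $f$ vs.\ $-f$, and for the Ricci contraction. Before combining, I would cross-check by tracing the Riemann identity against $\tilde g^{ik}$ and verifying that it reproduces the Ricci identity in step one; this catches any stray sign mistake before it propagates into \eqref{Eq: biRicci change}.
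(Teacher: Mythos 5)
Your proof is correct. The sign conventions are consistent throughout (the formula $\Ric_{\tilde g} = \Ric_g + (m-2)\nabla_g^2 f + (m-2)df\otimes df + (\Delta_g f - (m-2)|\nabla_g f|^2)g$ is the correct one for $\tilde g = e^{-2f}g$, and evaluating the Riemann formula on an orthonormal pair with $\tilde v = e^f v$, $\tilde w = e^f w$ gives precisely the sectional-curvature identity the paper quotes), and the final tally of coefficients for the bi-Ricci formula is right. The approach is essentially the same as the paper's: the paper starts only from the conformal change of $\Rm(v,w,w,v)$ and obtains both the Ricci and bi-Ricci formulas by taking "suitable linear combinations" (i.e.\ summing over an orthonormal frame to trace), whereas you take the Ricci conformal formula as a separate known input and then combine it with the Riemann formula. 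This saves the trace computation at the cost of invoking one more standard identity; either way the calculation is the same plug-and-collect, so the two proofs are effectively interchangeable.
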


\begin{proof}
It is well-known that we have
\[
\begin{split}
\Rm_{\ti{g}}(\ti{v},\ti{w},\ti{w},\ti{v})
= e^{2f}&\Big( \Rm_{g}(v,w,w,v)+\nabla_{g}^{2}f(v,v)\\
&\qquad+\nabla_{g}^{2}f(w,w)+v(f)^{2}+w(f)^{2}-|\nabla_{g}f|_g^{2} \Big)
\end{split}
\]
for any $g$-orthonormal pair of vectors $(v,w)$. The formulas for Ricci curvature and bi-Ricci curvature follow from suitable linear combinations of the above formula.
\end{proof}

\end{document}